\documentclass[12pt,reqno]{amsart}

\usepackage{amssymb,amsmath,graphicx,amsfonts,euscript,mathrsfs}
\usepackage{xcolor}
\usepackage{hyperref}

\numberwithin{equation}{section}

\def\H{{\cal H}}

\def\R{\mathbb{R}}

\def\F{\overrightarrow{F}}
\def\q{\overrightarrow{q}}
\def\H1{H^1}
\def\N{\mathcal{N}}

\DeclareMathOperator{\sech}{sech} 

\newtheorem{thm}{Theorem}
\newtheorem{lem}{Lemma}

\newtheorem{prop}{Proposition}

\newtheorem{cor}{Corollary}

\newtheorem{defn}{Definition}

\newtheorem{remark}{Remark}

\makeatletter
\newcommand{\Extend}[5]{\ext@arrow0099{\arrowfill@#1#2#3}{#4}{#5}}
\makeatother
\RequirePackage{color}\definecolor{RED}{rgb}{1,0,0}\definecolor{BLUE}{rgb}{0,0,1} 
\DeclareOldFontCommand{\sf}{\normalfont\sffamily}{\mathsf} 
\providecommand{\DIFaddtex}[1]{{\protect\color{red} #1}} 
\providecommand{\DIFaddbegin}{} 
\providecommand{\DIFaddend}{} 
\providecommand{\DIFdelbegin}{} 
\providecommand{\DIFdelend}{} 
\providecommand{\DIFaddbeginFL}{} 
\providecommand{\DIFaddendFL}{} 
\providecommand{\DIFdelbeginFL}{} 
\providecommand{\DIFdelendFL}{} 
\providecommand{\DIFadd}[1]{\texorpdfstring{\DIFaddtex{#1}}{#1}} 
\newcommand{\DIFscaledelfig}{0.5}
\RequirePackage{settobox} 
\RequirePackage{letltxmacro} 
\newsavebox{\DIFdelgraphicsbox} 
\newlength{\DIFdelgraphicswidth} 
\newlength{\DIFdelgraphicsheight} 
\LetLtxMacro{\DIFOincludegraphics}{\includegraphics} 
\newcommand{\DIFaddincludegraphics}[2][]{{\color{red}\fbox{\DIFOincludegraphics[#1]{#2}}}} 
\newcommand{\DIFdelincludegraphics}[2][]{
\sbox{\DIFdelgraphicsbox}{\DIFOincludegraphics[#1]{#2}}
\settoboxwidth{\DIFdelgraphicswidth}{\DIFdelgraphicsbox} 
\settoboxtotalheight{\DIFdelgraphicsheight}{\DIFdelgraphicsbox} 
\scalebox{\DIFscaledelfig}{
\parbox[b]{\DIFdelgraphicswidth}{\usebox{\DIFdelgraphicsbox}\\[-\baselineskip] \rule{\DIFdelgraphicswidth}{0em}}\llap{\resizebox{\DIFdelgraphicswidth}{\DIFdelgraphicsheight}{
\setlength{\unitlength}{\DIFdelgraphicswidth}
\begin{picture}(1,1)
\thicklines\linethickness{2pt} 
{\color[rgb]{1,0,0}\put(0,0){\framebox(1,1){}}}
{\color[rgb]{1,0,0}\put(0,0){\line( 1,1){1}}}
{\color[rgb]{1,0,0}\put(0,1){\line(1,-1){1}}}
\end{picture}
}\hspace*{3pt}}} 
} 
\LetLtxMacro{\DIFOaddbegin}{\DIFaddbegin} 
\LetLtxMacro{\DIFOaddend}{\DIFaddend} 
\LetLtxMacro{\DIFOdelbegin}{\DIFdelbegin} 
\LetLtxMacro{\DIFOdelend}{\DIFdelend} 
\DeclareRobustCommand{\DIFaddbegin}{\DIFOaddbegin \let\includegraphics\DIFaddincludegraphics} 
\DeclareRobustCommand{\DIFaddend}{\DIFOaddend \let\includegraphics\DIFOincludegraphics} 
\DeclareRobustCommand{\DIFdelbegin}{\DIFOdelbegin \let\includegraphics\DIFdelincludegraphics} 
\DeclareRobustCommand{\DIFdelend}{\DIFOaddend \let\includegraphics\DIFOincludegraphics} 
\LetLtxMacro{\DIFOaddbeginFL}{\DIFaddbeginFL} 
\LetLtxMacro{\DIFOaddendFL}{\DIFaddendFL} 
\LetLtxMacro{\DIFOdelbeginFL}{\DIFdelbeginFL} 
\LetLtxMacro{\DIFOdelendFL}{\DIFdelendFL} 
\DeclareRobustCommand{\DIFaddbeginFL}{\DIFOaddbeginFL \let\includegraphics\DIFaddincludegraphics} 
\DeclareRobustCommand{\DIFaddendFL}{\DIFOaddendFL \let\includegraphics\DIFOincludegraphics} 
\DeclareRobustCommand{\DIFdelbeginFL}{\DIFOdelbeginFL \let\includegraphics\DIFdelincludegraphics} 
\DeclareRobustCommand{\DIFdelendFL}{\DIFOaddendFL \let\includegraphics\DIFOincludegraphics} 
\RequirePackage{listings} 
\RequirePackage{color} 
\lstdefinelanguage{DIFcode}{ 
  moredelim=[il][\color{red}\scriptsize]{\%DIF\ <\ }, 
  moredelim=[il][\color{blue}\sffamily]{\%DIF\ >\ } 
} 
\lstdefinestyle{DIFverbatimstyle}{ 
	language=DIFcode, 
	basicstyle=\ttfamily, 
	columns=fullflexible, 
	keepspaces=true 
} 
\lstnewenvironment{DIFverbatim}{\lstset{style=DIFverbatimstyle}}{} 
\lstnewenvironment{DIFverbatim*}{\lstset{style=DIFverbatimstyle,showspaces=true}}{} 

\begin{document}

\setcounter{page}{1}

\title[Endpoint instability for gBBM]{Orbital instability of solitary waves for the generalized BBM equation at the negative critical endpoint}

\author{Cui Ning}
\address{School of Financial Mathematics and Statistics, Guangdong University of Finance, Guangzhou 510521, China}
\email{cuining@gduf.edu.cn}
\thanks{}
%
%
%



\keywords{generalized Benjamin--Bona--Mahony equation, solitary waves, orbital instability, critical endpoint, coercivity, modulation}

\begin{abstract}\noindent
We study the orbital stability of solitary waves to the
generalized Benjamin--Bona--Mahony equation
$$
u_t+u_x+\kappa\frac{p+1}{2}(u^p)_x-u_{txx}=0,
\qquad (t,x)\in\mathbb R^+\times\mathbb R,
$$
where $\kappa\in\mathbb R\setminus\{0\}$ and $p\geq2$ is an integer. This equation
admits solitary waves of the form
$$
u(t,x)=\phi_c(x-ct).
$$
At the negative critical endpoint speed
$$
c=c_p^-=
\frac{p-1}{2(p+1)}
\left(1-\sqrt{\frac{p+3}{2}}\right)<0,
$$
we prove that the corresponding solitary wave is orbitally
unstable whenever either $\kappa<0$, or $\kappa>0$ and $p$ is even.
The proof combines a codimension-two coercivity estimate, two-parameter
modulation, and a corrected localized virial functional adapted to the
degeneracy of the momentum slope at the endpoint.
\end{abstract}
 \maketitle
\section{Introduction}
We study orbital instability  of solitary waves for the generalized Benjamin--Bona--Mahony (gBBM) equation
  \begin{equation}\label{pbbm}
u_t+u_x+\kappa\frac{p+1}{2}(u^p)_x-u_{txx}=0,
\qquad (t,x)\in\mathbb R^+\times\mathbb R,
\end{equation}
where $\kappa\in\mathbb R\setminus\{0\}$ and $p\ge2$ is an  integer. The BBM equation was introduced as a regularized model for unidirectional long waves, and equations of BBM type have subsequently been used in several long-wave  settings; see, for example, \cite{benbon72,per66}. The Cauchy problem and the qualitative behavior of solitary waves for BBM-type equations have been studied extensively; see \cite{albohe87,sostr90,zeng03}  and the references therein .

Equation \eqref{pbbm} possesses the conserved quantities
\[
E(u)=\frac{1}{2}\int_{\mathbb R}(u^2+\kappa u^{p+1})\,dx,
\qquad
Q(u)=\frac{1}{2}\int_{\mathbb R}(u^2+u_x^2)\,dx,
\]
and solitary-wave solutions $u(t,x)=\phi_c(x-ct)$,  where $\phi_c$  solves
\begin{equation}\label{phie}
c\phi_c''+(1-c)\phi_c+\frac{\kappa(p+1)}{2}\phi_c^p=0.
\end{equation}
For the parameter regimes considered below, an explicit profile is
\begin{equation}\label{phic}
\phi_c(x)=A_c\,\sech^{\frac{2}{p-1}}(K_cx),
\end{equation}
where $K_c=\frac{p-1}{2}\sqrt{\frac{c-1}{c}}$ and $A_c=\left(\frac{c-1}{\kappa}\right)^\frac{1}{p-1}$.
Thus negative-speed solitary waves exist when  either $\kappa<0$, or when $\kappa>0$ and $p$ is even. Here $A_c$ denotes the real-root convention, which is crucial on the negative branch and will be adopted throughout this paper.

The stability theory of solitary waves is governed by the variational action $S_c=E-cQ$ and by the spectral properties of $S_c''(\phi_c)$. The general Hamiltonian framework of Grillakis--Shatah--Strauss \cite{grishastr87,gss90}, together with BBM-specific variational and instability arguments of Bona and collaborators and of Souganidis--Strauss \cite{bona75,albohe87,sostr90,bomcre00,zeng03}, gives the usual nondegenerate criterion in terms of the slope of the momentum along the solitary-wave branch. Pego and Weinstein \cite{pegwei92} further related the sign change of this slope to spectral instability for solitary waves of dispersive equations.

For the normalization in \eqref{pbbm}, the two critical speeds are
\begin{equation}\label{c_p}
c_p^+=\frac{p-1}{2(p+1)}\left(1+\sqrt{\frac{p+3}{2}}\right)>0,
\qquad
c_p^-=\frac{p-1}{2(p+1)}\left(1-\sqrt{\frac{p+3}{2}}\right)<0.
\end{equation}
On the positive branch  $c>1$  with  $\kappa>0$, the standard slope criterion yields stability for $2\le p\le5$ and, for $p>5$,   stability  for $c>c_p^+$ and  instability for $1<c<c_p^+$. The critical positive-speed case is degenerate because $\partial_cQ(\phi_c)=0$; in the equivalent power-law notation used by Jia and Wu, orbital instability at this positive critical speed was proved in \cite{JW25}. On the negative branch, the non-endpoint theory gives stability for $c<c_p^-$ and instability for $c_p^-<c<0$ in the admissible cases considered here; see \cite{kal06,kalngu09,kal09,ngkal092} and related BBM stability literature.

The purpose of this paper is to settle the remaining negative-speed endpoint $c=c_p^-$.   At this speed,
\[
\partial_cQ(\phi_c)\big|_{c=c_p^-}=0,
\]
so  the standard nondegenerate slope criterion does not decide stability. The degeneracy also affects the modulation argument: besides the translation kernel of $S_c''(\phi_c)$, variation of the speed produces a second distinguished direction, and the usual rough estimate $|\dot y-\lambda|\lesssim\|\xi\|_{H^1}$ leaves a first-order term in the localized virial identity.

Our argument has three main ingredients. First, we construct an explicit direction
  \[
\Gamma_c=c^3\partial_c\phi_c-\frac{c^2}{p-1}\phi_c+\frac{c}{2}x\partial_x\phi_c+c\phi_c
\]
for which $\langle S_c''(\phi_c)\Gamma_c,\Gamma_c\rangle<0$, and set  $\tau_c=S_c''(\phi_c)\Gamma_c$.
This produces a coercivity estimate on the codimension-two space orthogonal to $\partial_x\phi_c$  and $\tau_c$. Second, we use these two orthogonality conditions to modulate both translation and speed. Third, a localized primitive of the fixed endpoint direction $\Gamma_c$ gives a refined identity for $\dot y-\lambda$ with constants uniform in the localization scale; a fixed-coefficient correction to the localized virial functional then cancels the linear modulation term up to quadratic errors. The resulting corrected virial quantity has  a strictly positive derivative for a suitable one-parameter  family of perturbations,   which yields orbital instability.

\begin{defn}
The solitary wave  $\phi_c(x-ct)$  is said to be orbitally stable in $H^1(\mathbb R)$ if, for every $\varepsilon>0$, there exists $\delta>0$ such that
\[
\inf_{s\in\mathbb R}\|u_0-\phi_c(\cdot-s)\|_{H^1}<\delta
\]
implies that the corresponding solution  is global and satisfies
\[
\inf_{s\in\mathbb R}\|u(t)-\phi_c(\cdot-s)\|_{H^1}<\varepsilon
\]
for all $t\geq 0$. Otherwise the solitary wave  is said to be orbitally  unstable.
\end{defn}

We now state the main result of this paper.

\begin{thm}\label{thm:mainthm}
Let $p\geq2$ be an integer and let $c=c_p^-$ . Assume that either
 $\kappa<0$, or
 $\kappa>0$ and  $p$ is  even.
Then the solitary wave $\phi_c(x-ct)$ of \eqref{pbbm} is  orbitally unstable.
\end{thm}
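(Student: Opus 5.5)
We argue by contradiction, following the three-step scheme announced in the introduction. Suppose $\phi_c$, $c=c_p^-$, is orbitally stable. Fix $\varepsilon>0$ small; every solution whose initial datum lies within $\delta$ of the orbit then stays in the tube $U_\varepsilon=\{u:\inf_s\|u-\phi_c(\cdot-s)\|_{H^1}<\varepsilon\}$ for all $t\ge0$. The goal is a one-parameter family of data $u_0^\theta\to\phi_c$ in $H^1$ for which a localized corrected virial functional stays bounded along the flow, while its time derivative is bounded below by a fixed positive constant. These two facts are incompatible.

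\emph{Spectral step.} First we show that $\langle S_c''(\phi_c)\Gamma_c,\Gamma_c\rangle<0$ for the explicit direction $\Gamma_c$. The profile is \eqref{phic}, and $\partial_c\phi_c$ and $x\partial_x\phi_c$ are explicit sech powers, so this pairing reduces to Beta-function integrals. The endpoint condition $\partial_cQ(\phi_c)=0$, i.e.\ the defining relation \eqref{c_p} for $c_p^-$, is what fixes the sign. Next we combine three facts:
\begin{itemize}
\item $L_c=S_c''(\phi_c)=-c\partial_x^2+(1-c)-\kappa\frac{p(p+1)}{2}\phi_c^{p-1}$, up to normalization, has exactly one negative eigenvalue, because $c<0$ and $\phi_c^{p-1}$ is a Pöschl--Teller potential;
\item its kernel is spanned by $\partial_x\phi_c$;
\item $\Gamma_c$ is a negative direction.
\end{itemize}
A standard min-max argument then gives the coercivity bound $\langle L_c\xi,\xi\rangle\ge C\|\xi\|_{H^1}^2$ for all $\xi\perp\partial_x\phi_c,\tau_c$, with $\tau_c=L_c\Gamma_c$.

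\emph{Modulation step.} For $u\in U_\varepsilon$, the implicit function theorem gives $C^1$ parameters $y(t),\lambda(t)$ such that $\xi=u(\cdot+y)-\phi_{c+\lambda}$ satisfies $\xi\perp\partial_x\phi_{c+\lambda}$ and $\xi\perp\tau_{c+\lambda}$. The relevant Jacobian is nondegenerate. In particular the $\lambda$ entry is $\langle\partial_c\phi_c,\tau_c\rangle=\langle L_c\partial_c\phi_c,\Gamma_c\rangle=-\langle Q'(\phi_c),\Gamma_c\rangle$, which must be checked to be nonzero by explicit computation. Differentiating the orthogonality conditions in time yields $|\dot\lambda|\lesssim\|\xi\|_{H^1}^2$ and a first-order estimate $|\dot y-c-\lambda|\lesssim\|\xi\|_{H^1}$. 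Conservation of $E$ and $Q$, together with coercivity, then controls $\|\xi\|^2$ and $\lambda$ in terms of the initial deviation, with the usual care because $\partial_cQ=0$.

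\emph{Virial step.} Let $G$ be a localized primitive of $\Gamma_c$ and set
\[
I(t)=\int\psi_A(x-y(t))\,G(x-y(t))\,(1-\partial_x^2)u\,dx
\]
for a cutoff $\psi_A$. This is bounded uniformly in $t$ for fixed $A$. Computing $\dot I$ with \eqref{pbbm}, the main term is $\langle L_c\Gamma_c,\xi\rangle$-type quantities plus a linear modulation term $(\dot y-c-\lambda)\langle\cdot\rangle$. The refined identity for $\dot y-\lambda$ expresses this linear term as a fixed linear functional of $\xi$ plus $O(\|\xi\|^2)$, with constants uniform in $A$. Subtracting a fixed multiple of a conserved-quantity-based functional cancels it, leaving $\dot{\tilde I}\ge \kappa_0\,\mathcal{F}(u)-O(\|\xi\|^2)-O(A^{-1})$. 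Here $\mathcal{F}$ has a definite sign on the unstable side of the energy surface.

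\emph{Initial data and contradiction.} Take $u_0^\theta=\phi_c+\theta\Gamma_c$ with $\theta$ small and of the sign making $S_c(u_0^\theta)<S_c(\phi_c)$, while $Q$ is adjusted to $Q(\phi_c)$. The sign of $\mathcal F$ is then preserved along the flow by a Grillakis--Shatah--Strauss-type argument, and $\mathcal F\gtrsim\theta^2$ uniformly. So $\dot{\tilde I}\ge c\theta^2>0$, and since $\tilde I$ is bounded this is a contradiction for large $t$.

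\emph{Main obstacle.} The hardest step is the refined modulation identity that cancels the first-order term at the degenerate endpoint. The cancellation must hold with constants independent of $A$, and the remaining quadratic errors must be beaten by $\theta^2$. I would first try choosing $G$ so that $\partial_x G=\Gamma_c$ exactly, and read off $\dot y-\lambda$ from the time derivative of $\langle u,\tau_c\rangle$.
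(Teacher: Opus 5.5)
The gap is in the step that is supposed to produce a definite sign, and it fails exactly because of the endpoint degeneracy. Your spectral and modulation steps match the paper's, and you rightly single out the localized primitive of $\Gamma$. But write $u(\cdot+y)=\phi_\lambda+\xi$ as in the paper. By parity, your $I(t)=\int\psi_AG(x-y)(1-\partial_x^2)u\,dx$ equals $\langle\xi,(1-\partial_x^2)\psi_AG\rangle$, which is the paper's \emph{correction} term, not a virial with a sign. Since $(1-\partial_x^2)u_t=-\partial_xE'(u)$, you get $\dot I=\langle(\psi_AG)'(\cdot-y),E'(u)-\dot y\,Q'(u)\rangle$. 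For $G'=\Gamma_\lambda$ this is $\langle\tau_\lambda,\xi\rangle-(\dot y-\lambda)B(\lambda)$ plus quadratic and cutoff errors (with $\Gamma_c$, add $O(|\lambda-c|\,\|\xi\|_{H^1})$), and the first term vanishes by orthogonality. So $\dot I$ is nothing but the modulation term; this is Corollary~\ref{bl}. After you ``cancel'' it, nothing with a sign is left, and you never identify which term of $\dot{\tilde I}$ is $\mathcal F$.

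Your fallback is a Grillakis--Shatah--Strauss argument with $Q(u_0^\theta)=Q(\phi_c)$ and $S_c(u_0^\theta)$ below $S_c(\phi_c)$ by $\gtrsim\theta^2$. This is unavailable at $c=c_p^-$. The operator $S_c''(\phi_c)$ has one negative eigenvalue and $\langle S_c''(\phi_c)\partial_c\phi_c,\partial_c\phi_c\rangle=\partial_cQ(\phi_c)=0$. Hence $\langle S_c''(\phi_c)h,h\rangle\ge0$ for all $h\perp Q'(\phi_c)$; otherwise $\mathrm{span}\{h,\partial_c\phi_c,\partial_x\phi_c\}$ would be a three-dimensional nonpositive subspace, contradicting min-max. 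In particular $\Gamma_c$ is not tangent to the constraint: $\langle Q'(\phi_c),\Gamma_c\rangle=\langle\partial_c\phi_c,\tau_c\rangle\neq0$ is exactly the nondegeneracy your own modulation Jacobian uses. After adjusting $Q$ you only get $S_c(u_0^\theta)-S_c(\phi_c)\ge-C|\theta|^3$. So the GSS lemma turning a constrained action gap into a sign-definite virial derivative breaks down, and ``$\mathcal F\gtrsim\theta^2$'' has no basis. Even if it held, it would a priori be of the same order as your $O(\|\xi\|_{H^1}^2)$ errors.

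What the paper does instead is use a different main functional and data off the $Q$-level set. The virial is the localized energy moment $I_1(t)=\int\varphi(x-y)\bigl(\frac12u^2+\frac\kappa2u^{p+1}\bigr)\,dx$, with $I_1'=-(\dot y-\lambda)D(\lambda)-\lambda E(u)+\frac{\lambda^2}{2}\bigl(\|\phi_\lambda\|_2^2-\|\partial_x\phi_\lambda\|_2^2\bigr)+O(\|\xi\|_{H^1}^2+R^{-1})$. The primitive of $\Gamma_\lambda$ enters only through $I_2=-\frac{D(\lambda)}{B(\lambda)}\langle\xi,(1-\partial_x^2)f_\lambda\rangle$. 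This removes $-(\dot y-\lambda)D(\lambda)$ because Corollary~\ref{bl} writes $\dot y-\lambda$ as an exact time derivative up to $O(\|\xi\|_{H^1}^2+R^{-1})$; a ``fixed linear functional of $\xi$'' without this structure could not be absorbed.

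The sign has two sources. First, $u_0=(1-a)\phi_c$ leaves the $Q$-level set at first order and raises the energy, so $-\lambda[E(u_0)-E(\phi_c)]\approx C_1a>0$. Second, $g(\lambda)=\lambda[E(\phi_\lambda)-E(\phi_c)]\ge C_2(\lambda-c)^2$, where $g'(c)=0$ is the endpoint condition and $g''(c)>0$. Coercivity applied to $S_\lambda(u)-S_\lambda(\phi_\lambda)$ gives $\|\xi\|_{H^1}^2=O(a^2+a|\lambda-c|)+o((\lambda-c)^2)$, which the paper absorbs into $C_1a+C_2(\lambda-c)^2$. Note that a $Q$-constrained gap $S_c(u_0)<S_c(\phi_c)$ forces $E(u_0)<E(\phi_c)$, the opposite of what this mechanism needs. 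Note also that $\lambda-c$ is never bounded by the initial deviation; $(\lambda-c)^2$ is used as a positive term.

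Finally, your bound $|\dot\lambda|\lesssim\|\xi\|_{H^1}^2$ is not available. It is what $\xi\perp Q'(\phi_\lambda)$ would give, but that is the degenerate condition, since its $\dot\lambda$-coefficient is $\partial_\lambda Q(\phi_\lambda)$. With $\xi\perp\tau_\lambda$, the $\dot\lambda$-equation keeps the linear term $\langle\xi,S_\lambda''(\phi_\lambda)(1-\partial_x^2)^{-1}\partial_x\tau_\lambda\rangle$, so one only gets $|\dot\lambda|=O(\|\xi\|_{H^1})$, as in \eqref{xit-yt}.
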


This paper is organized as follows. In Section 2, we introduce some important functionals and collect several useful lemmas. In Section 3, we construct the negative direction and establish the coercivity estimate. In Section 4, we establish the modulation estimates. In Section 5, we prove Theorem \ref{thm:mainthm}.

\section{Preliminaries}

\subsection{Notations}
We use $X\lesssim Y$ to denote an estimate of the form
$X\leq CY$ for some constant $C>0$. Similarly, we will write $X\sim Y$ to mean $X\lesssim Y$ and $Y\lesssim X$.

For $u,v\in L^2(\mathbb{R})$, we define
$$(u,v)_{L^2}=\int_{\mathbb{R}}u(x){v(x)}\,dx$$
and regard $L^2(\mathbb{R})$ as a real Hilbert space.

For a function $f(x)$, its $L^{q}(\R)$-norm $\|f\|_{q}=\Big(\displaystyle\int_{\mathbb{R}} |f(x)|^{q}dx\Big)^{\frac{1}{q}}$
and its $\H1(\R)$-norm $\|f\|_{H^1}=(\|f\|^2_{2}+\|\partial_x f\|^2_{2})^{\frac{1}{2}}$.

For $\varepsilon>0$, we denote the orbital neighborhood of $\phi_c$ by
\begin{equation*}
U_\varepsilon(\phi_c)
:=
\left\{
u\in H^1(\mathbb R):
\inf_{y\in\mathbb R}
\|u-\phi_c(\cdot-y)\|_{H^1}<\varepsilon
\right\}.
\end{equation*}

\subsection{Conservation laws}
The solution $u(t)$ of  \eqref{pbbm} satisfies the following conservation laws
$${E}(u(t))={E}(u_0),  {Q}(u(t))={Q}(u_0)$$
for all $t\in[0,T_{max})$, where
\begin{align*}
{E}(u(t))&=\frac{1}{2}\int_{\R}(u^2+\kappa u^{p+1})\, dx,\\
{Q}(u(t))&=\frac{1}{2}\int_{\R}(u^2+u_x^2)\,dx.
\end{align*}

\subsection{Some functionals}
From the definitions of $E$, and $Q$, we have
\begin{align}
E'(u)=&u+\frac{\kappa(p+1)}{2}u^p,\label{E'}\\
Q'(u)=&u-\partial^2_xu\label{Q'}.
\end{align}
Now we define
\begin{align*}
S_c(u)&=E(u)-cQ(u)\\
&=\frac{1}{2}\int_{\R}(u^2+\kappa u^{p+1})\, dx-\frac{c}{2}\int_{\R}(u^2+u_x^2)\, dx.
\end{align*}
Then we have
\begin{align}
S'_c(u)&=E'(u)-cQ'(u)\nonumber\\
&=c\partial_x^2u+(1-c)u+\frac{\kappa(p+1)}{2}u^p.\label{S'}
\end{align}
Moreover, \eqref{phie} is equivalent to $S'_c(\phi_c)=0$.

\subsection{Useful Lemmas}
In this subsection, we give some lemmas which are useful in the following sections.
First, we have the following  identities.
\begin{lem}\label{S''}
The operator $S''_c(\phi_c)$ is self-adjoint, that is, for any $f,g\in\H1(\R)$,
\begin{align}\label{26.1}
\langle S''_c(\phi_c)f,g\rangle=\langle S''_c(\phi_c)g,f\rangle.
\end{align}
Furthermore,
  \begin{align}
&S''_c(\phi_c)\partial_x\phi_c=0,\label{SCP}\\
&S''_c(\phi_c)\phi_c=\frac{\kappa(p+1)(p-1)}{2}\phi_c^p,\nonumber\\
&S''_c(\phi_c)(x\partial_x\phi_c)=2c\partial_x^2\phi_c,\nonumber\\
&S''_c(\phi_c)\partial_c\phi_c=Q'(\phi_c)\nonumber.
\end{align}
\end{lem}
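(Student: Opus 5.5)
Self-adjointness is immediate from the explicit form of the linearization. From \eqref{S'},
\[
S''_c(\phi_c)f=c\,\partial_x^2 f+(1-c)f+\frac{\kappa p(p+1)}{2}\phi_c^{p-1}f ,
\]
which is a Schrödinger-type operator with a real, bounded potential. The pairing $\langle S''_c(\phi_c)f,g\rangle$ is understood as the quadratic form
\[
\int_{\R}\Big(-c f_x g_x+(1-c)fg+\tfrac{\kappa p(p+1)}{2}\phi_c^{p-1}fg\Big)\,dx .
\]
This form is obtained by one integration by parts, and it is visibly symmetric in $f,g$ for $f,g\in H^1$. That gives \eqref{26.1}.

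For the four identities I would differentiate the profile equation $S'_c(\phi_c)=0$ (equation \eqref{phie}) with respect to the available symmetry parameters, or substitute directly.

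\textbf{Translation.} Differentiating \eqref{phie} in $x$ gives $S''_c(\phi_c)\partial_x\phi_c=0$, which is \eqref{SCP}.

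\textbf{The profile itself.} Substitute $\phi_c$ into the linearized operator and subtract \eqref{phie}:
\[
S''_c(\phi_c)\phi_c=c\phi_c''+(1-c)\phi_c+\frac{\kappa p(p+1)}{2}\phi_c^p
=\Big(\frac{\kappa p(p+1)}{2}-\frac{\kappa(p+1)}{2}\Big)\phi_c^p
=\frac{\kappa(p+1)(p-1)}{2}\phi_c^p .
\]

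\textbf{Scaling.} Take $f=x\phi_c'$. Then $(x\phi_c')''=2\phi_c''+x\phi_c'''$, so
\[
S''_c(\phi_c)(x\phi_c')=2c\phi_c''+x\Big(c\phi_c'''+(1-c)\phi_c'+\tfrac{\kappa p(p+1)}{2}\phi_c^{p-1}\phi_c'\Big).
\]
The bracket is $\partial_x$ of \eqref{phie}, hence vanishes. This leaves $2c\,\partial_x^2\phi_c$.

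\textbf{Speed.} Differentiate $S'_c(\phi_c)=E'(\phi_c)-cQ'(\phi_c)=0$ in $c$. This gives $S''_c(\phi_c)\partial_c\phi_c-Q'(\phi_c)=0$, i.e. $S''_c(\phi_c)\partial_c\phi_c=Q'(\phi_c)$.

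The only point needing care is justifying the differentiation in $c$. The explicit formula \eqref{phic} shows that $c\mapsto\phi_c$ is smooth into $H^2$ near $c=c_p^-$, using the real-root convention for $A_c$: there $(c-1)/\kappa$ has a fixed sign, and for $\kappa>0$ the evenness of $p$ keeps the real root smooth. Also $\partial_c\phi_c$, $x\phi_c'$ and $\phi_c^p$ all decay exponentially. So all expressions lie in $H^1$, the integrations by parts have no boundary terms, and none of the steps is a genuine obstacle.
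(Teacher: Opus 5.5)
Your proof is correct and follows essentially the same route as the paper, which writes down the explicit formula for $S''_c(\phi_c)f$ and leaves the four identities to the direct computation you carry out: differentiating the profile equation in $x$ and in $c$, and substituting $\phi_c$ and $x\partial_x\phi_c$. The only difference is cosmetic: the paper obtains self-adjointness from the equality of mixed partials $\partial_t\partial_s S_c(\phi_c+sg+tf)$ at $s=t=0$, whereas you read it off the explicit symmetric bilinear form; both arguments are valid.
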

\begin{proof}
Since
$$
\partial_t\partial_sS_c(\phi_c+sg+tf)=\partial_s\partial_tS_c(\phi_c+sg+tf),
$$
setting $t=s=0$ yields \eqref{26.1}.
Moreover, by \eqref{S'}, we have
\begin{align}\label{def:S''}
S''_c(\phi_c)f=c\partial_x^2f+(1-c)f+\frac{\kappa(p+1)p}{2}\phi_c^{p-1}f.
\end{align}
The remaining formulas are obtained by combining the above identity with a straightforward computation.
\end{proof}

\begin{lem}\label{S-phi-phi}
Let $c=c_p^-<0$. Then we have
$$\langle S''_c(\phi_c)\phi_c,\phi_c\rangle <0.$$
\end{lem}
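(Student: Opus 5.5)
By Lemma \ref{S''}, $S''_c(\phi_c)\phi_c=\frac{\kappa(p+1)(p-1)}{2}\phi_c^p$. Pairing with $\phi_c$ gives
\[
\langle S''_c(\phi_c)\phi_c,\phi_c\rangle=\frac{\kappa(p+1)(p-1)}{2}\int_{\mathbb R}\phi_c^{p+1}\,dx .
\]
So the claim reduces to the sign of $\kappa\int\phi_c^{p+1}\,dx$. The plan is to show that this quantity is negative in both admissible regimes.

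The cleanest route avoids the explicit integral. Pair the profile equation \eqref{phie} with $\phi_c$ and integrate by parts:
\[
-c\int(\phi_c')^2+(1-c)\int\phi_c^2+\frac{\kappa(p+1)}{2}\int\phi_c^{p+1}=0 .
\]
This gives
\[
\frac{\kappa(p+1)}{2}\int\phi_c^{p+1}=c\int(\phi_c')^2-(1-c)\int\phi_c^2 .
\]
At $c=c_p^-<0$ we have $c\int(\phi_c')^2\le 0$, and $1-c>1>0$ gives $-(1-c)\int\phi_c^2<0$, since $\phi_c\not\equiv0$. Hence $\kappa\int\phi_c^{p+1}<0$. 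Multiplying by $(p-1)>0$ yields
\[
\langle S''_c(\phi_c)\phi_c,\phi_c\rangle=(p-1)\Big(c\|\phi_c'\|_2^2-(1-c)\|\phi_c\|_2^2\Big)<0 .
\]

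We also check the result directly against \eqref{phic}. There $A_c^{p-1}=(c-1)/\kappa$, so $\kappa A_c^{p-1}=c-1<0$. If $\kappa<0$ then $c-1<0$ forces $A_c>0$, so $\phi_c>0$ and $\kappa\int\phi_c^{p+1}<0$. If $\kappa>0$ and $p$ is even, the real root $A_c<0$ exists because $p-1$ is odd. Then $\phi_c^{p+1}=\phi_c^{p-1}\phi_c^2$ and $\kappa\phi_c^{p-1}=(c-1)\sech^2(K_cx)<0$, so again $\kappa\int\phi_c^{p+1}<0$. Both computations agree.

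The only point needing care is that the integrations by parts are justified and $\phi_c\in H^1$. This is clear from the exponential decay of the $\sech$ profile, since $K_c$ is real and positive when $c<0$. No step here is a serious obstacle.
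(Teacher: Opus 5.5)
Your proof is correct, but your main argument takes a different route from the paper's. The paper also reduces to the sign of $\frac{\kappa(p+1)(p-1)}{2}\int_{\mathbb R}\phi_c^{p+1}\,dx$. It then reads off the sign of $\phi_c$ from the explicit profile \eqref{phic} in two cases: $\kappa<0$ gives $\phi_c>0$, while $\kappa>0$ with $p$ even gives $\phi_c<0$ and $p+1$ odd. That is essentially your secondary ``check''.

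Your primary route instead pairs \eqref{phie} with $\phi_c$ (the Nehari identity). This gives $\langle S_c''(\phi_c)\phi_c,\phi_c\rangle=(p-1)\bigl(c\|\phi_c'\|_2^2-(1-c)\|\phi_c\|_2^2\bigr)$, which is negative from $c<0$ alone. It needs no case split, no knowledge of the sign of $\phi_c$, and no real-root convention, and it applies to any nontrivial $H^1$ solution of \eqref{phie}. It also makes clear that the hypotheses on $\kappa$ and the parity of $p$ matter only for the existence of a real profile, not for the sign of this quadratic form.

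Your identity is the first relation in the proof of Lemma \ref{connection}, with the correct sign $-c\|\partial_x\phi_c\|_2^2$. The paper's display there has $+c$, a slip that does not affect \eqref{xphi}--\eqref{pphi}. Combining your identity with the Pohozaev relation gives \eqref{pphi}, i.e. $\kappa\int_{\mathbb R}\phi_c^{p+1}\,dx=\frac{4(c-1)}{p+3}\|\phi_c\|_2^2<0$, which settles the sign for every $c<1$ at once.

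Your observation that $\kappa\phi_c^{p-1}=(c-1)\sech^2(K_cx)$ is also a neat way to merge the paper's two cases into one.
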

\begin{proof}
From \eqref{def:S''}, we have
$$
\langle S''_c(\phi_c)\phi_c, \phi_c\rangle=\frac{\kappa(p+1)(p-1)}{2}\int_{\R}\phi_c^{p+1}\,dx.
$$
We consider two cases.

Case (i): $c=c_p^-<0$, $\kappa<0$. By \eqref{phic}, we know that $\phi_c>0$ and $\frac{\kappa(p+1)(p-1)}{2}<0$. Hence we have $\langle S''_c(\phi_c)\phi_c, \phi_c\rangle<0$.

Case (ii): $c=c_p^-<0$, $\kappa>0$ with $p$ is even. By \eqref{phic}, we obtain $\phi_c<0$ and $\frac{\kappa(p+1)(p-1)}{2}>0$. Since $p$ is even, we also get $\langle S''_c(\phi_c)\phi_c, \phi_c\rangle<0$.

This finishes the proof.
\end{proof}

\begin{lem}\label{connection}
The solitary wave $\phi_c$ satisfies the following identities:
  \begin{align*}
    &\|\partial_x\phi_c\|_2^2=\frac{(p-1)(c-1)}{(p+3)c}\|\phi_c\|_2^2,\\
    &\int_{\R}\phi_c^{p+1}dx=\frac{4(c-1)}{\kappa(p+3)}\|\phi_c\|_2^2,\\
    &\partial_c\|\phi_c\|_2^2=\frac{4c-p+1}{2(p-1)(c-1)c}\|\phi_c\|_2^2.
  \end{align*}
\end{lem}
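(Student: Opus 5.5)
The first two identities come from two integral identities for the profile equation \eqref{phie}, combined linearly. The third comes from differentiating the explicit formula \eqref{phic} in $c$. Throughout I write $A=\|\partial_x\phi_c\|_2^2$, $B=\|\phi_c\|_2^2$ and $C=\int_{\R}\phi_c^{p+1}\,dx$. All boundary terms vanish because $\phi_c$ and $\phi_c'$ decay exponentially. This uses $(c-1)/c>0$ on the branch considered, so $K_c>0$ is real.

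\emph{Energy identity.} Multiply \eqref{phie} by $\phi_c$ and integrate, integrating by parts once in the $\phi_c''$ term:
\[
-cA+(1-c)B+\frac{\kappa(p+1)}{2}C=0 .
\]

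\emph{Pohozaev identity.} Multiply \eqref{phie} by $x\partial_x\phi_c$ and integrate. The three terms are handled as follows:
\[
\int x\phi_c''\phi_c'=\int x\Big(\tfrac{1}{2}\phi_c'^2\Big)'=-\tfrac12 A,
\qquad
\int x\phi_c\phi_c'=-\tfrac12 B,
\qquad
\int x\phi_c^p\phi_c'=-\tfrac{1}{p+1}C .
\]
Multiplying the resulting identity by $-2$ gives
\[
cA+(1-c)B+\kappa C=0 .
\]

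\emph{Solving the linear system.} Adding the two identities eliminates $A$: $2(1-c)B+\kappa\frac{p+3}{2}C=0$. Hence $\kappa C=\frac{4(c-1)}{p+3}B$, which is the second claimed identity. Substituting this into the Pohozaev identity gives
\[
cA=(c-1)B-\frac{4(c-1)}{p+3}B=\frac{(p-1)(c-1)}{p+3}B .
\]
Dividing by $c\neq0$ yields the first identity.

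\emph{Derivative identity.} Change variables $y=K_cx$ in \eqref{phic}:
\[
\|\phi_c\|_2^2=A_c^2K_c^{-1}I_p,\qquad I_p=\int_{\R}\sech^{\frac{4}{p-1}}(y)\,dy ,
\]
where $I_p$ is independent of $c$.

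Under the real-root convention, $A_c^2=|c-1|^{2/(p-1)}|\kappa|^{-2/(p-1)}$. The product $A_c^2K_c^{-1}$ is smooth and positive in $c$ on the admissible branch. Its logarithmic derivative is computed term by term:
\[
\partial_c\log A_c^2=\frac{2}{(p-1)(c-1)}
\]
(the logarithmic derivative of $|c-1|$ is still $1/(c-1)$), and
\[
\partial_c\log K_c=\tfrac12\Big(\tfrac1{c-1}-\tfrac1c\Big)=\frac{1}{2c(c-1)} .
\]
Subtracting,
\[
\frac{\partial_c\|\phi_c\|_2^2}{\|\phi_c\|_2^2}=\frac{2}{(p-1)(c-1)}-\frac{1}{2c(c-1)}=\frac{4c-(p-1)}{2(p-1)c(c-1)} ,
\]
which is the third identity.

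The only delicate point is this last step on the negative branch, where $c-1<0$ and, for even $p$ with $\kappa>0$, $A_c<0$. I would handle it by working with $A_c^2$ rather than $A_c$, so that no fractional power of a negative number is ever differentiated. The first two identities are purely algebraic and hold regardless of the sign of $\phi_c$.
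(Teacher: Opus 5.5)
Your proof is correct and follows the paper's route: for the first two identities you pair \eqref{phie} with $\phi_c$ and with $x\partial_x\phi_c$ and solve the resulting $2\times 2$ linear system, and for the third you differentiate the scaling formula $\|\phi_c\|_2^2=A_c^2K_c^{-1}\int_{\R}\sech^{\frac{4}{p-1}}(y)\,dy$. Your intermediate formulas are in fact the correct ones. The paper's displayed energy identity has $+c\|\partial_x\phi_c\|_2^2$ where $-c\|\partial_x\phi_c\|_2^2$ is needed, and its scaling factor $\sqrt{(c-1)/c}$ should be $\sqrt{c/(c-1)}$, matching your $K_c^{-1}$.
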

\begin{proof}
Taking the inner products of \eqref{phie} with $\phi_c$ and with $x\partial_x\phi_c$,  respectively, and integrating by parts, we obtain
\begin{align*}
&c\|\partial_x\phi_c\|_2^2+(1-c)\|\phi_c\|_2^2+\frac{\kappa(p+1)}{2}\int_{\R}\phi_c^{p+1}\,dx=0,\\
&\frac{c}{2}\|\partial_x\phi_c\|_2^2-\frac{c-1}{2}\|\phi_c\|_2^2+\frac{\kappa}{2}\int_{\R}\phi_c^{p+1}\,dx=0
\end{align*}
Solving the above two equations yields
\begin{align}
&\|\partial_x\phi_c\|_2^2=\frac{(p-1)(c-1)}{(p+3)c}\|\phi_c\|_2^2,\label{xphi}\\
&\int_{\R}\phi_c^{p+1}dx=\frac{4(c-1)}{\kappa(p+3)}\|\phi_c\|_2^2.\label{pphi}
\end{align}
Next, by a scaling argument, we obtain
$$\|\phi_c\|_2^2=\left[\frac{2(c-1)}{\kappa(p+1)}\right]^{\frac{2}{p-1}}\sqrt{\frac{c-1}{c}}
\|\phi_0\|_2^2,$$
where $\phi_0$ is the solution of
$$-\partial_x^2\phi_0+\phi_0-\phi_0^p=0.$$
Differentiating with respect to $c$, we finally obtain
\begin{align}\label{cphi}
\partial_c\|\phi_c\|_2^2=\frac{4c-p+1}{2(p-1)(c-1)c}\|\phi_c\|_2^2.
\end{align}
This completes the proof of the lemma.
\end{proof}

\begin{lem}\label{Qc}
Let $c=c_p^-$, we have
 $$\partial_cQ(\phi_c)=0.$$
\end{lem}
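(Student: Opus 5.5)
We reduce $\partial_cQ(\phi_c)$ to a scalar multiple of $\|\phi_c\|_2^2$ using Lemma \ref{connection}, and check that this multiple vanishes exactly at $c=c_p^-$.

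First, by the first identity of Lemma \ref{connection},
\[
Q(\phi_c)=\frac12\Big(1+\frac{(p-1)(c-1)}{(p+3)c}\Big)\|\phi_c\|_2^2
=\frac{(p+3)c+(p-1)(c-1)}{2(p+3)c}\|\phi_c\|_2^2
=\frac{2(p+1)c-(p-1)}{2(p+3)c}\|\phi_c\|_2^2 .
\]
Set $g(c)=\frac{2(p+1)c-(p-1)}{2(p+3)c}$, so that $Q(\phi_c)=g(c)\|\phi_c\|_2^2$.

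Next, differentiate in $c$ by the product rule, using the third identity of Lemma \ref{connection} for $\partial_c\|\phi_c\|_2^2$. Since $g(c)=\frac{p+1}{p+3}-\frac{p-1}{2(p+3)c}$, we have $g'(c)=\frac{p-1}{2(p+3)c^2}$. Hence
\[
\partial_cQ(\phi_c)=\|\phi_c\|_2^2\Big[\frac{p-1}{2(p+3)c^2}+\frac{2(p+1)c-(p-1)}{2(p+3)c}\cdot\frac{4c-p+1}{2(p-1)(c-1)c}\Big].
\]
Multiply the bracket by the nonzero factor $4(p+3)(p-1)(c-1)c^2$ (nonzero because $c<0$). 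The condition $\partial_cQ(\phi_c)=0$ then becomes
\[
2(p-1)^2(c-1)+\big(2(p+1)c-(p-1)\big)\big(4c-(p-1)\big)=0 .
\]
Expanding gives the quadratic
\[
8(p+1)c^2-4(p-1)(p+1)c+2(p-1)^2c-2(p-1)^2+(p-1)^2=0,
\]
which simplifies to
\[
8(p+1)c^2-2(p-1)(p+3)c-(p-1)^2=0 .
\]

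Finally, solve this quadratic. Its discriminant is
\[
4(p-1)^2(p+3)^2+32(p+1)(p-1)^2=4(p-1)^2\big[(p+3)^2+8(p+1)\big].
\]
This quantity need not be a perfect square, so I will compare the roots directly with \eqref{c_p}. The roots are
\[
c=\frac{(p-1)\Big[(p+3)\pm\sqrt{(p+3)^2+8(p+1)}\Big]}{8(p+1)} .
\]
These have to agree with $\frac{p-1}{2(p+1)}\big(1\pm\sqrt{(p+3)/2}\big)$. That requires $(p+3)\pm\sqrt{\cdots}=4\pm 4\sqrt{(p+3)/2}$, which fails in general.

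The mismatch means I have probably mis-transcribed a constant in Lemma \ref{connection}. The delicate step is therefore the algebra, not the strategy. To locate the error I will recheck against the scaling formula. From $\|\phi_c\|_2^2\propto (c-1)^{2/(p-1)}\big((c-1)/c\big)^{1/2}$ we get
\[
\partial_c\log\|\phi_c\|_2^2=\frac{2}{(p-1)(c-1)}+\frac{1}{2c(c-1)}=\frac{4c+p-1}{2(p-1)c(c-1)} .
\]
So the sign in the stated numerator should be checked against this, and in the computation I will use the scaling-derived logarithmic derivative. Redoing the product rule with it, the condition becomes
\[
2(p-1)^2(c-1)+\big(2(p+1)c-(p-1)\big)\big(4c+p-1\big)=0,
\]
which simplifies to
\[
8(p+1)c^2+2(p-1)^2c+4(p-1)c+2(p-1)(p+1)c-2(p-1)^2-(p-1)^2=0 .
\]
I will then complete this expansion carefully and verify that $c_p^-$ from \eqref{c_p} is a root, by substituting $c=\frac{p-1}{2(p+1)}(1-s)$ with $s^2=\frac{p+3}{2}$. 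The step I expect to be the main obstacle is this final verification with consistent constants. The conclusion $\partial_cQ(\phi_c)=0$ then follows from $\|\phi_c\|_2^2\neq0$.
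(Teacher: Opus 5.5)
Your strategy is the paper's: write $Q(\phi_c)=\frac{2(p+1)c-(p-1)}{2(p+3)c}\|\phi_c\|_2^2$, differentiate, insert the third identity of Lemma \ref{connection}, and clear the denominator $4(p+3)(p-1)(c-1)c^2$. The cleared condition $2(p-1)^2(c-1)+\big(2(p+1)c-(p-1)\big)\big(4c-(p-1)\big)=0$ that you reach is correct. The argument breaks on the next line, which has an expansion error. The cross terms of the product are $-2(p+1)(p-1)c-4(p-1)c=-2(p-1)(p+3)c$, not $-4(p-1)(p+1)c$. Adding $2(p-1)^2c$ gives the linear coefficient $2(p-1)\big[(p-1)-(p+3)\big]=-8(p-1)$, so the correct quadratic is
\[
8(p+1)c^2-8(p-1)c-(p-1)^2=0 .
\]
Its discriminant is $32(p-1)^2(p+3)$ and its roots are
\[
\frac{(p-1)\big(2\pm\sqrt{2(p+3)}\big)}{4(p+1)}=\frac{p-1}{2(p+1)}\Big(1\pm\sqrt{\tfrac{p+3}{2}}\Big)=c_p^\pm .
\]
This is exactly the paper's numerator, which it factors as $8(p+1)(c-c_p^-)(c-c_p^+)$. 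No constant in Lemma \ref{connection} needed fixing.

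Because of that slip, you concluded that the third identity was mistranscribed and replaced $4c-p+1$ by $4c+p-1$. That change is wrong. The $L^2$ norm scales like $A_c^2K_c^{-1}$, so $\|\phi_c\|_2^2\propto|c-1|^{2/(p-1)}\sqrt{c/(c-1)}$, not $\sqrt{(c-1)/c}$. The same inversion appears in the scaling display in the proof of Lemma \ref{connection}, but the derivative formula stated there is the correct one. Indeed, $\partial_c\log\|\phi_c\|_2^2=\frac{2}{(p-1)(c-1)}-\frac{1}{2c(c-1)}=\frac{4c-p+1}{2(p-1)c(c-1)}$.

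With your modified derivative, the cleared condition becomes $8(p+1)c^2+4(p-1)^2c-3(p-1)^2=0$; your expansion of it also has the wrong sign on the $4(p-1)c$ term. This quadratic does not vanish at $c_p^-$: for $p=2$ it is about $-3.16$ at $c_2^-\approx-0.097$. So the final verification you postpone would fail, and as written the proposal does not prove the lemma. Keeping the stated formula and redoing the expansion as above completes it.
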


\begin{proof}
From the definition of $Q$, we have
\begin{align*}
Q(\phi_c)=\frac{1}{2}\|\phi_c\|_2^2+\frac{1}{2}\|\partial_x\phi_c\|_2^2.
\end{align*}
Substituting the expression for $\|\partial_x\phi_c\|_2^2$ from Lemma \ref{connection} yields
\begin{align*}
Q(\phi_c)=\frac{1}{2}\left[1+\frac{(p-1)(c-1)}{(p+3)c}\right]\|\phi_c\|_2^2
=\frac{2(p+1)c-(p-1)}{2(p+3)c}\|\phi_c\|_2^2.
\end{align*}
Differentiating with respect to $c$ gives
\begin{align*}
  \partial_cQ(\phi_c)=\frac{2(p+1)c-(p-1)}{2(p+3)c}\partial_c\|\phi_c\|_2^2
  +\frac{p-1}{2(p+3)c^2}\|\phi_c\|_2^2.
\end{align*}
Inserting the expression for $\partial_c\|\phi_c\|_2^2$ from Lemma \ref{connection} and simplifying, we obtain
  \begin{align*}
\partial_cQ(\phi_c)&=\frac{8(p+1)c^2-8(p-1)c-(p-1)^2}{4(p+3)(p-1)(c-1)c^2}\|\phi_c\|_2^2\\
&=\frac{2(p+1)(c-c_p^-)(c-c_p^+)}{(p+3)(p-1)(c-1)c^2}\|\phi_c\|_2^2,
\end{align*}
where $c_p^-$ and $c_p^+$ are defined in \eqref{c_p}.

Therefore, $\partial_cQ(\phi_c)=0$ at $c=c_p^-$, which completes the proof of the lemma.
\end{proof}

We next turn to an investigation of the spectrum of the self-adjoint operator $S_c''(\phi_c)$.
\begin{lem}\label{L5}
Let $c=c_p^-$ and assume one of the parameter regimes in Theorem
\ref{thm:mainthm}. Then
$S_c''(\phi_c)$ has exactly one negative eigenvalue, and
$$\mathrm{ker} S_c''(\phi_c)=\mathrm{Span}\{\partial_x\phi_c\}.$$
\end{lem}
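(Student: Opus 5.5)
Since $c=c_p^-<0$, I will first multiply by $-1/c>0$, which preserves the signs of eigenvalues and the kernel, and work with
\[
L:=-\frac{1}{c}S_c''(\phi_c)=-\partial_x^2+\frac{c-1}{c}-\frac{\kappa(p+1)p}{2c}\phi_c^{p-1}.
\]
Here $\frac{c-1}{c}=\frac{4K_c^2}{(p-1)^2}>0$ because $c<0$. By \eqref{phic} we have $\phi_c^{p-1}=A_c^{p-1}\sech^2(K_cx)=\frac{c-1}{\kappa}\sech^2(K_cx)$. For $p$ even and $\kappa>0$ this uses the real-root convention, since $\phi_c^{p-1}$ is an odd power of a real number. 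The potential is therefore
\[
-\frac{(p+1)p(c-1)}{2c}\sech^2(K_cx)=-\frac{2p(p+1)K_c^2}{(p-1)^2}\sech^2(K_cx),
\]
independently of the sign of $\kappa$. After the rescaling $z=K_cx$ we get
\[
L=K_c^2\Big(-\partial_z^2+\frac{4}{(p-1)^2}-\frac{2p(p+1)}{(p-1)^2}\sech^2 z\Big).
\]
This is a P\"oschl--Teller operator $-\partial_z^2+s^2-\nu(\nu+1)\sech^2z$ with $s=\frac{2}{p-1}$ and $\nu=\frac{p+1}{p-1}=s+1$, since $(s+1)(s+2)=\frac{(p+1)2p}{(p-1)^2}$.

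Next I will use the classical spectral theory of $-\partial_z^2-\nu(\nu+1)\sech^2 z$. Its discrete eigenvalues are $-(\nu-j)^2$ for integers $0\le j<\nu$, each simple, and the essential spectrum is $[0,\infty)$. Adding $s^2$ gives discrete eigenvalues $s^2-(s+1-j)^2$:
\begin{itemize}
\item $j=0$ gives $s^2-(s+1)^2<0$, which is one negative eigenvalue;
\item $j=1$ gives $0$;
\item every $j\ge2$ with $j<\nu$ gives a positive value.
\end{itemize}
The essential spectrum becomes $[s^2,\infty)$, with $s^2>0$. Hence $L$, and therefore $S_c''(\phi_c)$, has exactly one negative eigenvalue, and its kernel is one-dimensional.

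To identify the kernel, I will use $S_c''(\phi_c)\partial_x\phi_c=0$ from Lemma \ref{S''}. Since $\partial_x\phi_c\neq0$ lies in $H^2$, it spans the kernel.

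As an independent check of the count, I can use Sturm--Liouville oscillation theory. The function $\partial_x\phi_c$ has exactly one zero, at $x=0$, so it is the second eigenfunction and $0$ is the second eigenvalue. Lemma \ref{S-phi-phi} gives a negative direction, which confirms that the first eigenvalue is negative. Kernel simplicity follows because a second, non-decaying solution of the ODE is not in $L^2$.

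The only delicate point is the sign bookkeeping in the two parameter regimes. One has to verify that $\kappa\phi_c^{p-1}=c-1$ holds under the real-root convention, including when $\kappa>0$, $p$ is even, and $\phi_c<0$. One also has to check that dividing by $c<0$ flips $S_c''$ into an operator of the form $-\partial_x^2+{}$positive constant$\,-\,$positive well. Once the potential is identified as $-\frac{2p(p+1)K_c^2}{(p-1)^2}\sech^2$, everything reduces to the explicit P\"oschl--Teller spectrum.
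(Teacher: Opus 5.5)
Your proof is correct, and it takes a different route from the paper.

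\textbf{The paper's route.} The paper rescales to $\psi_w=\theta\phi_c$ with $w=1/c$ and writes $S_c''(\phi_c)=-cL_w$, where $L_w=-\partial_x^2+(1-w)-p\psi_w^{p-1}$. It then imports the fact that $L_w$ has exactly one negative eigenvalue from Weinstein's ground-state theory, with Lemma \ref{S-phi-phi} serving only as a consistency check. The kernel is identified separately, by a Wronskian argument showing that any decaying solution of $S_c''(\phi_c)v=0$ is a multiple of $\partial_x\phi_c$.

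\textbf{Your route.} You use the explicit profile \eqref{phic} to recognize $-\frac1c S_c''(\phi_c)$ as a rescaled P\"oschl--Teller operator with $s=\frac{2}{p-1}$ and $\nu=s+1$. From this you read off the whole spectrum:
\begin{itemize}
\item a simple eigenvalue $K_c^2\bigl(s^2-(s+1)^2\bigr)<0$;
\item a simple eigenvalue $0$;
\item finitely many eigenvalues in $(0,K_c^2s^2)$;
\item essential spectrum $[K_c^2s^2,\infty)$, which for $S_c''(\phi_c)$ itself is $[1-c,\infty)$.
\end{itemize}
Your computations are correct. This includes the key sign bookkeeping $\kappa\phi_c^{p-1}=(c-1)\sech^2(K_cx)$ in both parameter regimes under the real-root convention, and the identity $\nu(\nu+1)=\frac{2p(p+1)}{(p-1)^2}$.

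\textbf{What each route buys.} Your approach is self-contained and gives more information. Simplicity of the zero eigenvalue comes for free, so no Wronskian argument is needed. The explicit gap between $0$ and the rest of the nonnegative spectrum also justifies the constant $\rho_0>0$ that Remark \ref{R1} asserts without proof. The paper's approach instead rests on a known result about the ground state of the rescaled pure-power equation rather than on the exact $\sech$ formula. That makes it shorter, and its logic would apply in settings where no explicit P\"oschl--Teller structure is available.
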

\begin{proof}
Set $w=c^{-1}<0$ and define $\psi_w=\theta\phi_c$ with $\theta=[\frac{\kappa(p+1)}{2c}]^{\frac{1}{p-1}}$.
Substituting  into \eqref{phie} shows that $\psi_\omega$ satisfies
\begin{align}\label{l-w}
-\partial_x^2\psi_w+(1-w)\psi_w-\psi_w^p=0.
\end{align}

Define the operator $L_wf=-\partial_x^2f+(1-w)f-p\psi_w^{p-1}f$. Differentiating \eqref{l-w} with respect to $w$ yields $L_w\partial_w\psi_w=\psi_w.$ A direct computation from the definitions gives
\begin{align}\label{s-l}
S_c''(\phi_c)f=-cL_wf.
\end{align}
It is known (see \cite{weinstein1985}) that $L_w$ has exactly one negative eigenvalue. By Lemma \ref{S-phi-phi}, $\langle S''_c(\phi_c)\phi_c,\phi_c\rangle<0$, so $S''_c(\phi_c)$ has at least one negative eigenvalue.

Since $c<0$, the factor $-c>0$ in \eqref{s-l} preserves the sign and multiplicity of eigenvalues. Consequently, $S_c''(\phi_c)$ has exactly one negative eigenvalue.

%

Next, we proceed to prove the property of $\ker S_c''(\phi_c)$. From Lemma \ref{S''}, $S''_c(\phi_c)\partial_x\phi_c=0$, so $\partial_x\phi_c \in \ker S_c''(\phi_c)$.

Conversely, suppose $S_c''(\phi_c)v = 0$. The Wronskian
$$W(x)=\partial_x^2\phi_cv-\partial_x\phi_c \partial_xv$$
satisfies $W'(x) = 0$. By Liouville's formula, hence $W(x)$ is constant. As $|x|\rightarrow \infty$, $\partial_x\phi_c$ and $\partial_x^2\phi_c$ decay to zero, so $W(x)\rightarrow 0$. Thus $W(x)\equiv0$, which implies that $v$ is a scalar multiple of $\partial_x\phi_c$. Therefore $\ker S_c''(\phi_c) \subseteq \mathrm{Span}\{\partial_x\phi_c\}$.

Combining $S''_c(\phi_c)\partial_x\phi_c=0$ gives
$\mathrm{Ker} S_c''(\phi_c) = \mathrm{Span}\{\partial_x\phi_c\}.$
This completes the proof of lemma.
\end{proof}


\begin{remark}\label{R1}
The spectral properties established in Lemma \ref{L5} imply the following orthogonal decomposition of $ L^2(\mathbb{R}) $:
$$
L^2(\mathbb{R}) = \operatorname{Span}\{\chi_{-1}\} \oplus \operatorname{Span}\{\partial_x\phi_c\} \oplus \mathcal{P},
$$
where $ \chi_{-1} $ is the eigenfunction corresponding to the unique negative eigenvalue  $\lambda_{-1}<0$, and $\mathcal{P} $ is the positive spectral subspace satisfying
$$
\langle S_c''(\phi_c) p, p \rangle \ge \rho_0 \|p\|_{L^2}^2, \qquad \forall p \in \mathcal{P},
$$
for some $ \rho_0 > 0 $. For convenience, we normalize the eigenfunction $\chi_{-1}$ such that $\|\chi_{-1}\|_{L^2}=1$.
\end{remark}

\begin{cor}
The following identity holds:
$$S''_c(\phi_c)(c\partial_c\phi_c-\frac{1}{p-1}\phi_c)=\phi_c.$$
\end{cor}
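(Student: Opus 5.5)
The plan is to expand the left-hand side by linearity of $S''_c(\phi_c)$, evaluate each term with the identities of Lemma \ref{S''}, and then simplify using the profile equation \eqref{phie}. No spectral information is needed; this is a purely algebraic consequence of Lemma \ref{S''} and \eqref{phie}.

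First I would justify the last identity of Lemma \ref{S''}, since the corollary depends on it. The explicit formula \eqref{phic} shows that $c\mapsto\phi_c$ is smooth into $H^2(\mathbb R)$ for $c$ in a neighborhood of $c_p^-$, with the real-root convention for $A_c$ and $c<0$. For every such $c$ we have
\[
S'_c(\phi_c)=E'(\phi_c)-cQ'(\phi_c)=0 .
\]
Differentiating this in $c$, where $c$ enters both through the profile and explicitly in $S_c=E-cQ$, gives
\[
S''_c(\phi_c)\partial_c\phi_c-Q'(\phi_c)=0,
\]
so $S''_c(\phi_c)\partial_c\phi_c=\phi_c-\partial_x^2\phi_c$ by \eqref{Q'}.

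Next I would combine this with $S''_c(\phi_c)\phi_c=\frac{\kappa(p+1)(p-1)}{2}\phi_c^p$ from Lemma \ref{S''}. This gives
\[
S''_c(\phi_c)\Big(c\partial_c\phi_c-\tfrac{1}{p-1}\phi_c\Big)
= c\phi_c-c\partial_x^2\phi_c-\frac{\kappa(p+1)}{2}\phi_c^p .
\]
Finally, \eqref{phie} gives
\[
-c\partial_x^2\phi_c=(1-c)\phi_c+\frac{\kappa(p+1)}{2}\phi_c^p .
\]
Substituting this, the nonlinear terms cancel and the right-hand side becomes $c\phi_c+(1-c)\phi_c=\phi_c$, which is the claim.

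There is no real obstacle here. The only point needing care is the legitimacy of differentiating in $c$: the branch $\phi_c$ must exist and be differentiable on an open interval of speeds around $c_p^-$. This follows from the admissibility of the regimes in Theorem \ref{thm:mainthm} and the smooth dependence of $A_c$ and $K_c$ on $c<0$. Since $(c-1)/\kappa$ does not vanish near $c_p^-$, the real root $A_c$ is smooth there.
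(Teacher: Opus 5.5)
Your argument is correct, and it takes a different, more direct route than the paper.

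The paper passes to the normalized profile $\psi_w=\theta\phi_c$, with $w=c^{-1}$ and $\theta=[\kappa(p+1)/(2c)]^{1/(p-1)}$. It then uses three facts:
\begin{itemize}
\item the factorization $S''_c(\phi_c)=-cL_w$ from the proof of Lemma~\ref{L5};
\item the identity $L_w\partial_w\psi_w=\psi_w$, obtained by differentiating \eqref{l-w} in $w$;
\item the chain-rule computation $-\frac{1}{c\theta}\partial_w\psi_w=c\partial_c\phi_c-\frac{1}{p-1}\phi_c$.
\end{itemize}
You instead stay in the original variables. You combine $S''_c(\phi_c)\partial_c\phi_c=Q'(\phi_c)$ and $S''_c(\phi_c)\phi_c=\frac{\kappa(p+1)(p-1)}{2}\phi_c^p$ from Lemma~\ref{S''} with the profile equation \eqref{phie}, and the cancellation you display checks out.

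Both proofs rest on the same underlying fact: differentiating the profile equation along the branch. The paper's version explains where the specific combination $c\partial_c\phi_c-\frac{1}{p-1}\phi_c$ comes from. Up to the factor $-1/(c\theta)$, it is the derivative of the normalized profile in the parameter for which \eqref{l-w} has unit coefficients. That version also reuses the $L_w$ framework of Lemma~\ref{L5}.

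Your version is a shorter verification that avoids the rescaling and the real-root bookkeeping for $\theta$. It also actually justifies $S''_c(\phi_c)\partial_c\phi_c=Q'(\phi_c)$, which the paper leaves as a ``straightforward computation.'' Your remark that $c\mapsto\phi_c$ is smooth near $c_p^-$, because $A_c$ is a real root of the nonvanishing quantity $(c-1)/\kappa$, is exactly the regularity input both arguments need.
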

\begin{proof}
Recall that $\psi_w=\theta\phi_c$ with $\omega=c^{-1}$ and $\theta=[\frac{\kappa(p+1)}{2c}]^{\frac{1}{p-1}}$. A straightforward computation yields $$\frac{-1}{c\theta}\partial_w\psi_w=c\partial_c\phi_c-\frac{1}{p-1}\phi_c.$$
Using the relations $S_c''(\phi_c)f=-cL_wf$ and $L_w\partial_w\psi_w=\psi_w$, we obtain
\begin{align}
S''_c(\phi_c)(c\partial_c\phi_c-\frac{1}{p-1}\phi_c)=-cL_w[\frac{-1}{c\theta}\partial_w\psi_w]=\frac{1}{\theta}\psi_w=\phi_c.
\end{align}
The proof is complete.
\end{proof}

\section{Negative direction and Coercivity}
Define
$$\Gamma_c=c^3\partial_c\phi_c-\frac{c^2}{p-1}\phi_c+\frac{c}{2}x\partial_x\phi_c+c\phi_c,$$
and
$$\tau_c=S''_c(\phi_c)\Gamma_c=c^2\phi_c+c^2\partial_x^2\phi_c+\frac{c\kappa(p+1)(p-1)}{2}\phi_c^p.$$
The following lemma provides the required negative direction.
\begin{lem}\label{L6}
 Let $c=c_p^-$ and $p\geq2$. Then
  $$\langle S''_c(\phi_c)\Gamma_c, \Gamma_c\rangle<0.$$
\end{lem}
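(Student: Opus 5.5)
We compute $\langle S''_c(\phi_c)\Gamma_c,\Gamma_c\rangle$ directly. The idea is to use the identities of Lemma \ref{S''} and the Corollary to reduce it to a multiple of $\|\phi_c\|_2^2$ with an explicit rational coefficient in $c$ and $p$, and then to check the sign of that coefficient at $c=c_p^-$. Write
\[
\Gamma_c=c^2A+\tfrac{c}{2}B+c\phi_c,\qquad A:=c\partial_c\phi_c-\tfrac{1}{p-1}\phi_c,\quad B:=x\partial_x\phi_c .
\]
By the Corollary, $S''_c(\phi_c)A=\phi_c$. By Lemma \ref{S''}, $S''_c(\phi_c)B=2c\partial_x^2\phi_c$ and $S''_c(\phi_c)\phi_c=\frac{\kappa(p+1)(p-1)}{2}\phi_c^p$. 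This also confirms the stated formula for $\tau_c$. Using the symmetry \eqref{26.1}, we expand
\[
\langle S''_c\Gamma_c,\Gamma_c\rangle=c^4\langle S''_cA,A\rangle+\tfrac{c^2}{4}\langle S''_cB,B\rangle+c^2\langle S''_c\phi_c,\phi_c\rangle
+c^3\langle S''_cA,B\rangle+2c^3\langle S''_cA,\phi_c\rangle+c^2\langle S''_cB,\phi_c\rangle .
\]

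Each pairing is an elementary integral, evaluated by integration by parts:
\[
\langle S''_cA,A\rangle=\langle\phi_c,A\rangle=\tfrac{c}{2}\partial_c\|\phi_c\|_2^2-\tfrac{1}{p-1}\|\phi_c\|_2^2,\qquad
\langle S''_cA,B\rangle=\langle \phi_c,x\partial_x\phi_c\rangle=-\tfrac12\|\phi_c\|_2^2,
\]
\[
\langle S''_cA,\phi_c\rangle=\|\phi_c\|_2^2,\qquad
\langle S''_cB,B\rangle=2c\int\partial_x^2\phi_c\,x\partial_x\phi_c=-c\|\partial_x\phi_c\|_2^2,\qquad
\langle S''_cB,\phi_c\rangle=-2c\|\partial_x\phi_c\|_2^2,
\]
\[
\langle S''_c\phi_c,\phi_c\rangle=\tfrac{\kappa(p+1)(p-1)}{2}\int\phi_c^{p+1}\,dx .
\]
We then insert Lemma \ref{connection}, which gives $\|\partial_x\phi_c\|_2^2$, $\int\phi_c^{p+1}$ and $\partial_c\|\phi_c\|_2^2$ as explicit multiples of $\|\phi_c\|_2^2$. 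Note that $\kappa$ cancels in the $\phi_c^{p+1}$ term, giving $2(p+1)(p-1)(c-1)/(p+3)$. The result is
\[
\langle S''_c\Gamma_c,\Gamma_c\rangle=R_p(c)\,\|\phi_c\|_2^2,
\]
with $R_p$ a rational function of $c$ and $p$.

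The remaining step, and the main obstacle, is to show that $R_p(c_p^-)<0$ for every integer $p\ge2$. The plan is as follows.
\begin{itemize}
\item Clear the positive denominators, keeping track of the signs of $c<0$ and $c-1<0$.
\item Reduce the numerator modulo the endpoint relation $8(p+1)c^2-8(p-1)c-(p-1)^2=0$, which holds at $c=c_p^-$ by the factorization in Lemma \ref{Qc}. This lowers its degree in $c$ to at most one.
\item Substitute $c_p^-=\frac{p-1}{2(p+1)}(1-s)$ with $s=\sqrt{(p+3)/2}\ge\sqrt{5/2}$. The sign question then becomes an inequality of the form $\alpha(p)+\beta(p)s<0$ with polynomial coefficients.
\item Settle that inequality by isolating $s$ and squaring, tracking the sign of each side. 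This reduces it to a polynomial inequality in $p$, to be checked for $p\ge2$, for instance by showing the leading behaviour dominates and verifying small $p$ directly.
\end{itemize}
If the sign turned out to be delicate, we would use the freedom in the choice of $\Gamma_c$ as a fallback. Since $c^2A$ is the dominant piece and $\langle\phi_c,A\rangle$ relates to $\partial_cQ(\phi_c)=0$, the quadratic form can be checked in that direction. However, we expect the explicit coefficients of $\Gamma_c$ to have been chosen precisely so that this algebraic check closes.
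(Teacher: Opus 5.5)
Your proposal is correct and takes essentially the paper's route. You expand $\langle S''_c(\phi_c)\Gamma_c,\Gamma_c\rangle$ using Lemma~\ref{S''} and the Corollary, insert Lemma~\ref{connection}, and check the sign at $c_p^-$; all six pairings you list are right.

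The sign check you flag as the main obstacle turns out to be easy. The sum equals $\frac{c^2F(c)}{4(p+3)(p-1)(c-1)}\|\phi_c\|_2^2$, where
\[
F(c)=(8p^3-12p^2+24p-4)c^2-(16p^3-28p^2+32p-20)c+(8p-1)(p-1)^2 .
\]
All three terms of $F$ are positive for every $c<0$ and $p\ge2$, and the prefactor is negative since $c-1<0$. So neither the reduction by the endpoint relation nor the substitution-and-squaring step is needed.
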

\begin{proof}
Substituting the definitions of $\Gamma_c$ and $\tau_c$, we obtain
\begin{align*}
\langle S''_c(\phi_c)&\Gamma_c, \Gamma_c\rangle=\langle \tau_c, \Gamma_c\rangle\\
&=\langle c^2\phi_c+c^2\partial_x^2\phi_c+\frac{c\kappa(p+1)(p-1)}{2}\phi_c^p, c^3\partial_c\phi_c-\frac{c^2}{p-1}\phi_c+\frac{c}{2}x\partial_x\phi_c+c\phi_c\rangle.
\end{align*}
Expanding the inner product, integrating by parts where appropriate, and using Lemma \ref{connection}, we obtain
\begin{align*}
 \langle S''_c(\phi_c)\Gamma_c, \Gamma_c\rangle
=-c^5\partial_cQ(\phi_c)+\frac{c^2 f(p,c)}{4(p+3)(p-1)(c-1)}\|\phi_c\|_2^2,
\end{align*}
where the polynomial obtained by the full expansion is
\begin{align*}
f(p,c)=&\,8(p+1)c^3+(8p^3-12p^2+16p+4)c^2\\
&\,-(p-1)(16p^2-11p+19)c+(8p-1)(p-1)^2.
\end{align*}
By Lemma \ref{Qc}, at $c=c_p^-$ we have $\partial_cQ(\phi_c)=0$ and
\begin{align}\label{zero}
8(p+1)c^2-8(p-1)c-(p-1)^2=0.
\end{align}
Using \eqref{zero} to eliminate the higher powers of $c$, the corrected polynomial reduces to
\[
f(p,c)=\frac{(p-1)(p+3)}{2(p+1)}
\left[-16c(p^2-p+1)+2p^3+5p^2-8p+1\right].
\]
Since $c=c_p^-<0$ and $p\ge2$, both $-16c(p^2-p+1)$ and
$2p^3+5p^2-8p+1$ are strictly positive. Hence $f(p,c)>0$.
Therefore,
\begin{align}\label{L61}
\langle S''_c(\phi_c)\Gamma_c, \Gamma_c\rangle=\frac{c^2 f(p,c)}{4(p+3)(p-1)(c-1)}\|\phi_c\|_2^2<0.
\end{align}

%
%
The proof is complete.
\end{proof}

We next establish the coercivity estimate for the second variation.
\begin{prop}(Coercivity)\label{coercivity}
There exists a constant $C_0$ such that for any $\eta\in H^1(\R)$ satisfying
$$\langle\eta,\partial_x\phi_c\rangle=\langle\eta,\tau_c\rangle=0,$$
we have
$$\langle S''_c(\phi_c)\eta, \eta\rangle\geq C_0\|\eta\|_{H^1}^2.$$
\end{prop}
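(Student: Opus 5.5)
The plan is the standard Weinstein/GSS coercivity argument. It rests on Lemma \ref{L5} (exactly one negative eigenvalue, kernel spanned by $\partial_x\phi_c$), the decomposition of Remark \ref{R1}, and the negative direction $\Gamma_c$ of Lemma \ref{L6}. We first prove $L^2$-coercivity, $\langle S''_c(\phi_c)\eta,\eta\rangle\ge \rho\|\eta\|_2^2$, on the subspace $\{\eta:\langle\eta,\partial_x\phi_c\rangle=\langle\eta,\tau_c\rangle=0\}$. We then upgrade it to $H^1$ using the explicit form \eqref{def:S''}.

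\textbf{Step 1: nonnegativity.} Take $\eta$ with $\langle\eta,\partial_x\phi_c\rangle=\langle\eta,\tau_c\rangle=0$ and decompose $\eta=a\chi_{-1}+q$ with $q\in\mathcal P$; the kernel component vanishes by the first orthogonality. Decompose $\Gamma_c=b\chi_{-1}+d\,\partial_x\phi_c+r$ with $r\in\mathcal P$. Since $\tau_c=S''_c(\phi_c)\Gamma_c$ is self-adjoint and kills $\partial_x\phi_c$, we get
\[
0=\langle\eta,\tau_c\rangle=\lambda_{-1}ab+\langle S''_c(\phi_c) q,r\rangle .
\]
Lemma \ref{L6} gives $\lambda_{-1}b^2+\langle S''_c(\phi_c) r,r\rangle<0$, so $b\neq0$ and $\langle S''_c(\phi_c) r,r\rangle<-\lambda_{-1}b^2$. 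Cauchy--Schwarz for the positive form on $\mathcal P$ then yields
\[
\lambda_{-1}^2a^2b^2\le \langle S''_c(\phi_c) q,q\rangle\langle S''_c(\phi_c) r,r\rangle<\langle S''_c(\phi_c) q,q\rangle(-\lambda_{-1})b^2,
\]
hence $\langle S''_c(\phi_c) q,q\rangle> -\lambda_{-1}a^2$ whenever $a\ne0$. Therefore
\[
\langle S''_c(\phi_c)\eta,\eta\rangle=\lambda_{-1}a^2+\langle S''_c(\phi_c) q,q\rangle\ge0,
\]
with equality only if $\eta=0$. (If $a=0$ this follows from the coercivity on $\mathcal P$.)

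\textbf{Step 2: strict $L^2$ coercivity by contradiction.} Suppose $\eta_n$ satisfy the constraints, $\|\eta_n\|_2=1$, and $\langle S''_c(\phi_c)\eta_n,\eta_n\rangle\to0$. Decompose $\eta_n=a_n\chi_{-1}+q_n$. Run the Cauchy--Schwarz inequality above quantitatively, using the strict gap
\[
\delta:=-\lambda_{-1}b^2-\langle S''_c(\phi_c) r,r\rangle>0 .
\]
It gives
\[
\langle S''_c(\phi_c) q_n,q_n\rangle\ge \frac{-\lambda_{-1}b^2}{-\lambda_{-1}b^2-\delta}\,(-\lambda_{-1})a_n^2 ,
\]
so $\langle S''_c(\phi_c)\eta_n,\eta_n\rangle\gtrsim a_n^2$. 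This forces $a_n\to0$ and then $\langle S''_c(\phi_c) q_n,q_n\rangle\to0$, hence $\|q_n\|_2\to0$ by Remark \ref{R1}. This contradicts $\|\eta_n\|_2=1$. (If $\langle S''_c(\phi_c) r,r\rangle=0$, i.e.\ $r=0$, then $\langle\eta,\tau_c\rangle=0$ forces $a=0$ directly.) So there is $\rho>0$ with $\langle S''_c(\phi_c)\eta,\eta\rangle\ge\rho\|\eta\|_2^2$.

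\textbf{Step 3: upgrade to $H^1$.} From \eqref{def:S''} and $c<0$,
\[
\langle S''_c(\phi_c)\eta,\eta\rangle=|c|\|\partial_x\eta\|_2^2+(1-c)\|\eta\|_2^2+\tfrac{\kappa p(p+1)}{2}\int\phi_c^{p-1}\eta^2\ge |c|\|\eta\|_{H^1}^2-M\|\eta\|_2^2,
\]
with $M$ depending on $\|\phi_c\|_\infty$. Take the convex combination $(1-\epsilon)\times$(Step 2) $+\,\epsilon\times$(this bound), with $\epsilon$ small enough that $(1-\epsilon)\rho-\epsilon M>0$. This gives $C_0=\min\{\epsilon|c|,\,(1-\epsilon)\rho-\epsilon M\}$ up to adjustment.

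\textbf{Main obstacle.} The delicate point is Step 1. The constraint is orthogonality to $\tau_c=S''_c(\phi_c)\Gamma_c$ rather than to $\chi_{-1}$, so the negative mode is controlled only through $S''_c$-orthogonality to the negative vector $\Gamma_c$. The key inputs are that $\Gamma_c$ has nonzero $\chi_{-1}$-component and that the inequality from Lemma \ref{L6} is strict. A minor technical check is that $\Gamma_c\in H^1$, which holds since $\phi_c$, $\partial_c\phi_c$ and $x\partial_x\phi_c$ decay exponentially.
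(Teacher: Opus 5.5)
Your proposal is correct and follows essentially the same route as the paper. You decompose $\eta$ and $\Gamma_c$ along Remark~\ref{R1}, use the strict negativity from Lemma~\ref{L6} together with Cauchy--Schwarz for the positive form on $\mathcal P$ to control the $\chi_{-1}$-component and obtain $L^2$-coercivity, then upgrade to $H^1$ by a convex combination with the lower bound from the explicit form of $S''_c(\phi_c)$. The only difference is packaging: you bound $\langle S''_c(\phi_c)\eta,\eta\rangle\gtrsim a^2$ and then control $q$, phrased as a contradiction argument (Step 1 is redundant). The paper writes the same Cauchy--Schwarz computation directly as $\langle S''_c(\phi_c)\eta,\eta\rangle\ge \frac{\sigma_0}{\sigma_0+\langle S''_c(\phi_c)p_\Gamma,p_\Gamma\rangle}\langle S''_c(\phi_c)p_\eta,p_\eta\rangle$.
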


\begin{proof}
By Remark \ref{R1}, $ L^2(\mathbb{R})$ admits the orthogonal decomposition
$$
L^2(\mathbb{R}) = \operatorname{Span}\{\chi_{-1}\} \oplus \operatorname{Span}\{\partial_x\phi_c\} \oplus \mathcal{P},
$$
where $\chi_{-1}$ is the eigenfunction corresponding to the unique negative eigenvalue $ \lambda_{-1}$, and $\mathcal{P}$ is the positive spectral subspace. We decompose $\eta$ and $\Gamma_c$ according to the orthogonal decomposition as follows:
\begin{align*}
\eta& = \alpha_\eta \chi_{-1} + \beta_\eta \partial_x \phi_c + p_\eta, \qquad \alpha_\eta\in \mathbb{R}, \ \beta_\eta \in \mathbb{R}, \  p_\eta \in \mathcal{P},\\
\Gamma_c &= \alpha_\Gamma \chi_{-1} + \beta_\Gamma \partial_x \phi_c + p_\Gamma, \qquad \alpha_\Gamma\in \mathbb{R}, \ \beta_\Gamma\in \mathbb{R}, p_\Gamma \in \mathcal{P}.
\end{align*}
Since $\langle \eta, \partial_x\phi_c \rangle = 0$, we have $\beta_\eta=0$. Therefore, we can write
\begin{align}\label{etad}
\eta=\alpha_\eta \chi_{-1} + p_\eta.
\end{align}
Moreover, we have
\begin{align}
\langle S_c''(\phi_c) \eta, \eta \rangle &= \langle S_c''(\phi_c)(\alpha_\eta \chi_{-1} + p_\eta), \alpha_\eta \chi_{-1} + p_\eta \rangle\nonumber\\
&=\lambda_{-1}\alpha_\eta^2+\langle S_c''(\phi_c)p_\eta, p_\eta \rangle\label{SEE}
\end{align}
By \eqref{SCP}, we have
\begin{align}\label{taud}
\tau_c=S_c''(\phi_c)\Gamma_c
=S_c''(\phi_c)(\alpha_\Gamma \chi_{-1} + \beta_\Gamma \partial_x \phi_c + p_\Gamma)
=S_c''(\phi_c)(\alpha_\Gamma \chi_{-1} + p_\Gamma).
\end{align}
Moreover, we have
\begin{align*}
\langle S_c''(\phi_c)\Gamma_c, \Gamma_c\rangle
&=\langle S_c''(\phi_c)(\alpha_\Gamma \chi_{-1} + p_\Gamma), \alpha_\Gamma \chi_{-1} + \beta_\Gamma \partial_x \phi_c + p_\Gamma\rangle\nonumber\\
&=\lambda_{-1}\alpha_\Gamma^2+\langle S_c''(\phi_c)p_\Gamma, p_\Gamma\rangle<0.
\end{align*}
In particular, $\alpha_\Gamma\neq0$. There exist a positive constant $\sigma_0>0$, such that
\begin{align}\label{LSS}
\lambda_{-1}\alpha_\Gamma^2+\langle S_c''(\phi_c)p_\Gamma, p_\Gamma\rangle=-\sigma_0<0.
\end{align}
Using $\langle \eta, \tau_c \rangle = 0$, \eqref{etad} and \eqref{taud}, we have
\begin{align*}
\langle \eta, \tau_c \rangle=\langle \alpha_\eta \chi_{-1} + p_\eta, S_c''(\phi_c)(\alpha_\Gamma \chi_{-1} + p_\Gamma)\rangle
=\lambda_{-1}\alpha_\eta\alpha_\Gamma+\langle S_c''(\phi_c)p_\eta, p_\Gamma\rangle
=0.
\end{align*}
Then, we have
\begin{align*}
\big(-\lambda_{-1}\alpha_\eta\alpha_\Gamma)\big)^2
=\big(\langle S_c''(\phi_c)p_\eta, p_\Gamma\rangle\big)^2.
\end{align*}
Moreover, we obtain
\begin{align*}
(-\lambda_{-1}\alpha^2_\eta)(-\lambda_{-1}\alpha^2_\Gamma)
=\big(\langle S_c''(\phi_c)p_\eta, p_\Gamma\rangle\big)^2
\leq \langle S_c''(\phi_c)p_\eta, p_\eta\rangle\langle S_c''(\phi_c)p_\Gamma, p_\Gamma\rangle.
\end{align*}
Combining with \eqref{LSS}, we know that
\begin{align}
\lambda_{-1}\alpha^2_\eta
&\geq \frac{\langle S_c''(\phi_c)p_\eta, p_\eta\rangle\langle S_c''(\phi_c)p_\Gamma, p_\Gamma\rangle}{\lambda_{-1}\alpha^2_\Gamma}\nonumber\\
&=\frac{\langle S_c''(\phi_c)p_\eta, p_\eta\rangle\langle S_c''(\phi_c)p_\Gamma, p_\Gamma\rangle}{-\big(\sigma_0+\langle S_c''(\phi_c)p_\Gamma, p_\Gamma\rangle\big)}.\label{LASS}
\end{align}
 Then, we have
\begin{align}\label{AE}
\alpha_\eta^2\leq  \frac{\langle S_c''(\phi_c)p_\eta, p_\eta\rangle\langle S_c''(\phi_c)p_\Gamma, p_\Gamma\rangle}{-\lambda_{-1}(\sigma_0+\langle S_c''(\phi_c)p_\Gamma, p_\Gamma\rangle}).
\end{align}
Using \eqref{SEE} and \eqref{LASS}, we obtain
\begin{align}
\langle S_c''(\phi_c) \eta, \eta \rangle\geq&
\frac{\langle S_c''(\phi_c)p_\eta, p_\eta\rangle\langle S_c''(\phi_c)p_\Gamma, p_\Gamma\rangle}{-\big(\sigma_0+\langle S_c''(\phi_c)p_\Gamma, p_\Gamma\rangle\big)}
+\langle S_c''(\phi_c)p_\eta, p_\eta\rangle\nonumber\\
=&\langle S_c''(\phi_c)p_\eta, p_\eta\rangle\big[1-\frac{\langle S_c''(\phi_c)p_\Gamma, p_\Gamma\rangle}{\sigma_0+\langle S_c''(\phi_c)p_\Gamma, p_\Gamma\rangle}\big]\nonumber\\
=&\langle S_c''(\phi_c)p_\eta, p_\eta\rangle\frac{\sigma_0}{\sigma_0+\langle S_c''(\phi_c)p_\Gamma, p_\Gamma\rangle}\label{SEEG}\\
\geq&\frac{\rho_0\sigma_0}{\sigma_0+\langle S_c''(\phi_c)p_\Gamma, p_\Gamma\rangle}\|p_\eta\|_2^2.\nonumber
\end{align}
By \eqref{etad}, \eqref{AE}, \eqref{SEEG}and $\langle S_c''(\phi_c)p_\Gamma, p_\Gamma \rangle\geq\rho_0\|p_\Gamma\|_2^2$, we have
\begin{align*}
\|\eta\|_2^2=&\|\alpha_\eta \chi_{-1} + p_\eta\|_2^2=\alpha_\eta^2+\|p_\eta\|_2^2\\
\leq&\frac{\langle S_c''(\phi_c)p_\eta, p_\eta\rangle\langle S_c''(\phi_c)p_\Gamma, p_\Gamma\rangle}{-\lambda_{-1}(\sigma_0+\langle S_c''(\phi_c)p_\Gamma, p_\Gamma\rangle)}+\|p_\eta\|_2^2\\
\leq&\frac{\langle S_c''(\phi_c)\eta,\eta\rangle\langle S_c''(\phi_c)p_\Gamma, p_\Gamma\rangle}{-\lambda_{-1}\sigma_0}+\|p_\eta\|_2^2\\
\lesssim&\langle S_c''(\phi_c)\eta,\eta\rangle.
\end{align*}
Hence, there exists a constant $C'>0$ such that
\[
\langle S_c''(\phi_c)\eta,\eta\rangle\ge C'\|\eta\|_2^2.
\]
From the explicit form of $ S_c''(\phi_c) $,
integration by parts gives
$$
\langle S_c''(\phi_c) \eta, \eta \rangle
= -c \|\partial_x\eta\|_{L^2}^2 + \int_{\mathbb{R}} V(x) \eta^2 \, dx,
$$
where
$$
V(x) := 1 - c + \kappa \frac{p(p+1)}{2} \phi_c^{p-1}(x).
$$

Since $ \phi_c $ is bounded and decays exponentially, $ V $ is bounded below; hence there exists $ C'' > 0 $ such that $ V(x) \ge -C'' $ for all $ x \in \mathbb{R} $. Thus
\begin{align}\label{tag2}
\langle S_c''(\phi_c) \eta, \eta \rangle \ge -c \|\eta_x\|_{L^2}^2 - C'' \|\eta\|_{L^2}^2.
\end{align}
Combining the preceding $L^2$ coercivity estimate with \eqref{tag2},  for any $ \theta \in (0,1) $,
\begin{align*}
\langle S_c''(\phi_c) \eta, \eta \rangle
&= \theta \langle S_c''(\phi_c) \eta, \eta \rangle + (1-\theta) \langle S_c''(\phi_c) \eta, \eta \rangle \\
&\ge -c\theta \|\eta_x\|_{L^2}^2 + \big( C'(1-\theta) - C'' \theta \big) \|\eta\|_{L^2}^2.
\end{align*}
Choose $ \theta $ sufficiently small so that
$$
C'(1-\theta) - C'' \theta > 0,
$$
for instance $ \theta = \frac{C'}{2(C'+C'')} $. Define
$$
C_0 := \min \left\{ -c\theta,\; C'(1-\theta) - C'' \theta \right\} > 0.
$$
Then
$$
\langle S_c''(\phi_c) \eta, \eta \rangle \ge C_0 \left( \|\eta_x\|_{L^2}^2 + \|\eta\|_{L^2}^2 \right) = C_0 \|\eta\|_{H^1}^2.
$$
The proof is complete.
\end{proof}
For $\lambda<0$ in a fixed sufficiently small neighborhood of $c$, define
\begin{equation}\label{Gamma-lambda-def}
\Gamma_\lambda
:=\lambda^3\partial_\lambda\phi_\lambda
-\frac{\lambda^2}{p-1}\phi_\lambda
+\frac{\lambda}{2}x\partial_x\phi_\lambda
+\lambda\phi_\lambda,
\qquad
\tau_\lambda:=S_\lambda''(\phi_\lambda)\Gamma_\lambda.
\end{equation}

\begin{cor}[Uniform coercivity]\label{uniform-coercivity}
There exist $\delta_0>0$ and $C_*>0$ such that, for every
$|\lambda-c|<\delta_0$ and every $\eta_*\in H^1(\mathbb R)$ satisfying
\[
\langle\eta_*,\partial_x\phi_\lambda\rangle
=
\langle\eta_*,\tau_\lambda\rangle=0,
\]
one has
\[
\langle S_\lambda''(\phi_\lambda)\eta_*,\eta_*\rangle
\ge C_*\|\eta_*\|_{H^1}^2.
\]
\end{cor}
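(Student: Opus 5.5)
The plan is a perturbation argument from Proposition \ref{coercivity}, using the smooth dependence of $\phi_\lambda$, $\partial_x\phi_\lambda$ and $\tau_\lambda$ on $\lambda$. We argue by contradiction and compactness, or more directly as follows.

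\emph{Step 1: continuity in $\lambda$.} From the explicit formula \eqref{phic}, for $\lambda<0$ near $c$ the maps
\[
\lambda\mapsto \phi_\lambda,\quad \partial_x\phi_\lambda,\quad \partial_\lambda\phi_\lambda,\quad x\partial_x\phi_\lambda,\quad \phi_\lambda^{p-1}
\]
are continuous (indeed smooth) into $H^2(\mathbb R)\cap L^\infty$. This holds because $A_\lambda$ and $K_\lambda$ are smooth in $\lambda$ with $K_\lambda$ bounded below, and all these functions decay exponentially, uniformly in $\lambda$. Consequently $\tau_\lambda\to\tau_c$ and $\partial_x\phi_\lambda\to\partial_x\phi_c$ in $L^2$. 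Moreover, by \eqref{def:S''},
\[
\big|\langle (S_\lambda''(\phi_\lambda)-S_c''(\phi_c))\eta,\eta\rangle\big|\le \big(|\lambda-c|\cdot 2+\tfrac{|\kappa|p(p+1)}{2}\|\phi_\lambda^{p-1}-\phi_c^{p-1}\|_\infty\big)\|\eta\|_{H^1}^2=:\omega(\lambda)\|\eta\|_{H^1}^2,
\]
with $\omega(\lambda)\to0$ as $\lambda\to c$.

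\emph{Step 2: projection onto the reference constraints.} Given $\eta_*$ orthogonal to $\partial_x\phi_\lambda$ and $\tau_\lambda$, write
\[
\eta_*=\eta+a\,\partial_x\phi_c+b\,\tau_c ,
\]
where $a,b$ are chosen so that $\eta$ is orthogonal to $\partial_x\phi_c$ and $\tau_c$. The Gram matrix of $\{\partial_x\phi_c,\tau_c\}$ is invertible. Indeed, $\langle\tau_c,\partial_x\phi_c\rangle=\langle\Gamma_c,S_c''\partial_x\phi_c\rangle=0$, and $\tau_c\neq0$ because $\langle\tau_c,\Gamma_c\rangle<0$ by Lemma \ref{L6}. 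The matrix is therefore diagonal with positive entries. Using $\langle\eta_*,\partial_x\phi_\lambda\rangle=0$ we get
\[
|a|\lesssim|\langle\eta_*,\partial_x\phi_c-\partial_x\phi_\lambda\rangle|\lesssim \|\partial_x\phi_c-\partial_x\phi_\lambda\|_2\|\eta_*\|_2,
\]
and similarly $|b|\lesssim\|\tau_c-\tau_\lambda\|_2\|\eta_*\|_2$. Hence $\|\eta_*-\eta\|_{H^1}\le\epsilon(\lambda)\|\eta_*\|_{H^1}$ with $\epsilon(\lambda)\to0$; here $\partial_x\phi_c,\tau_c\in H^1$ because $\phi_c$ is smooth and exponentially decaying.

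\emph{Step 3: combine.} Proposition \ref{coercivity} gives $\langle S_c''\eta,\eta\rangle\ge C_0\|\eta\|_{H^1}^2$. Since $S_c''(\phi_c)$ is bounded as a bilinear form on $H^1$,
\[
\langle S_c''\eta_*,\eta_*\rangle\ge C_0\|\eta\|_{H^1}^2-C\epsilon(\lambda)\|\eta_*\|_{H^1}^2\ge\big(C_0(1-\epsilon)^2-C\epsilon\big)\|\eta_*\|_{H^1}^2 .
\]
Adding the Step 1 error gives
\[
\langle S_\lambda''(\phi_\lambda)\eta_*,\eta_*\rangle\ge\big(C_0(1-\epsilon)^2-C\epsilon-\omega\big)\|\eta_*\|_{H^1}^2 .
\]
Choosing $\delta_0$ small makes the bracket at least $C_0/2=:C_*$.

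The main obstacle is Step 1: justifying the continuity of $\tau_\lambda$ in $L^2$ uniformly near $c$. This requires $\partial_\lambda\phi_\lambda$ and $x\partial_x\phi_\lambda$ to vary continuously, which I would verify directly from \eqref{phic} together with the uniform exponential decay $e^{-K_\lambda|x|}$. For even $p$ with $\kappa>0$, the real-root convention for $A_\lambda$ must be checked to be smooth; it is, since $\lambda-1<0$ stays away from zero.
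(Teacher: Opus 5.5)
Your proposal is correct and follows essentially the same route as the paper: you remove from $\eta_*$ its components along $\partial_x\phi_c$ and $\tau_c$ (whose Gram matrix is diagonal), bound the coefficients using the continuity of $\lambda\mapsto\partial_x\phi_\lambda,\tau_\lambda$, apply Proposition \ref{coercivity} at $c$, and absorb the difference of the quadratic forms. The only differences are cosmetic: you obtain $\langle\tau_c,\partial_x\phi_c\rangle=0$ from self-adjointness and $S_c''(\phi_c)\partial_x\phi_c=0$ rather than from parity, and you write out the continuity estimates that the paper only asserts.
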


\begin{proof}
The maps
\[
\lambda\mapsto S_\lambda''(\phi_\lambda),\qquad
\lambda\mapsto\partial_x\phi_\lambda,\qquad
\lambda\mapsto\tau_\lambda
\]
are continuous in the relevant operator and $H^1$ topologies.
Moreover, $\partial_x\phi_c$ is odd whereas $\tau_c$ is even, so
$\langle\partial_x\phi_c,\tau_c\rangle=0$.

Let $\eta_*$ satisfy the two orthogonality conditions at $\lambda$ and set
\[
a=\frac{\langle\eta_*,\partial_x\phi_c\rangle}
        {\|\partial_x\phi_c\|_2^2},
\qquad
b=\frac{\langle\eta_*,\tau_c\rangle}{\|\tau_c\|_2^2},
\qquad
\eta=\eta_*-a\partial_x\phi_c-b\tau_c.
\]
Then $\eta$ satisfies the two orthogonality conditions at
$c$. Since the corresponding inner products at $\lambda$ vanish,
continuity gives
\[
|a|+|b|
\le C|\lambda-c|\,\|\eta_*\|_{H^1},
\qquad
\|\eta_*-\eta\|_{H^1}
\le C|\lambda-c|\,\|\eta_*\|_{H^1}.
\]
Proposition \ref{coercivity} therefore yields
\[
\langle S_c''(\phi_c)\eta,\eta\rangle
\ge C_0\|\eta\|_{H^1}^2.
\]
Furthermore,
\[
\left|
\langle S_\lambda''(\phi_\lambda)\eta_*,\eta_*\rangle
-
\langle S_c''(\phi_c)\eta,\eta\rangle
\right|
\le C|\lambda-c|\,\|\eta_*\|_{H^1}^2.
\]
Taking $\delta_0$ sufficiently small proves the claim.
\end{proof}

\section{Modulation}
Throughout this section we fix $c=c_p^-$. Under the contradiction
hypothesis of orbital stability, for every sufficiently small
$\varepsilon>0$ the solution issued from sufficiently close initial
data remains in $U_\varepsilon(\phi_c)$ for all $t\ge0$. We therefore
assume below that
\[
u(t)\in U_\varepsilon(\phi_c),\qquad t\ge0,
\]
and decompose the solution by translation and variation of the wave
speed.

\begin{prop}\label{modulation}
 There exists $\varepsilon_0>0$ such that for any $\varepsilon\in(0,\varepsilon_0)$, if $u(t)\in U_\varepsilon(\phi_c)$ for any $t\in\geq 0$, then the following properties hold. There exist $C^1$ functions
$$(\lambda(t), y(t)):U_\varepsilon(\phi_{c}) \rightarrow \R^-\times\R$$
such that, upon defining the perturbation
\begin{align}
\xi(t,x)=u(t,x+y(t))-\phi_{\lambda(t)}(x),\label{modulation-u}
\end{align}
the following orthogonality conditions are satisfied for all $t\geq 0$:
\begin{align}\label{orth-condition}
\langle\xi(t),\partial_x\phi_{\lambda(t)}\rangle=\langle\xi(t),\tau_{\lambda(t)}\rangle=0,
\end{align}
where $\Gamma_\lambda$ and $\tau_\lambda$ are defined in
\eqref{Gamma-lambda-def}. Moreover,
\begin{equation}\label{modulation-close}
|\lambda(t)-c|+\|\xi(t)\|_{H^1}\le C\varepsilon,
\qquad t\ge0.
\end{equation}
For brevity, we omit the argument $t$ in $\lambda(t),\ y(t),\ \xi(t)$ in what follows.  Moreover, the time derivatives satisfy the estimate
\begin{align}\label{xit-yt}
\big|\dot\lambda\big|+\big|\dot y-\lambda\big|
=O\big( \|\xi\|_{H^1}\big).
\end{align}
\end{prop}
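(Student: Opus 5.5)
The plan has two parts: an implicit function theorem argument that produces $(\lambda,y)$ with the two orthogonality conditions and the smallness bound \eqref{modulation-close}, and a computation of $\dot\lambda$ and $\dot y-\lambda$ from the equation satisfied by $\xi$.

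\textbf{Step 1: construction of $(\lambda,y)$.} Define
\[
F(u,\lambda,y)=\Big(\langle u(\cdot+y)-\phi_\lambda,\partial_x\phi_\lambda\rangle,\ \langle u(\cdot+y)-\phi_\lambda,\tau_\lambda\rangle\Big),
\]
which is $C^1$ on $H^1\times(c-\delta_0,c+\delta_0)\times\R$. At $(u,\lambda,y)=(\phi_c,c,0)$ we have $F=0$, and the Jacobian in $(\lambda,y)$ is
\[
\begin{pmatrix} -\langle\partial_c\phi_c,\partial_x\phi_c\rangle & \|\partial_x\phi_c\|_2^2\\ -\langle\partial_c\phi_c,\tau_c\rangle & \langle\partial_x\phi_c,\tau_c\rangle\end{pmatrix}.
\]
By parity, $\langle\partial_c\phi_c,\partial_x\phi_c\rangle=0$ and $\langle\partial_x\phi_c,\tau_c\rangle=0$. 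The determinant is therefore $\|\partial_x\phi_c\|_2^2\,\langle\partial_c\phi_c,\tau_c\rangle$, and it remains to show $\langle\partial_c\phi_c,\tau_c\rangle\neq0$.

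\textbf{Key nondegeneracy.} By self-adjointness and Lemma \ref{S''},
\[
\langle\partial_c\phi_c,\tau_c\rangle=\langle S_c''(\phi_c)\partial_c\phi_c,\Gamma_c\rangle=\langle Q'(\phi_c),\Gamma_c\rangle .
\]
I would compute this pairing using Lemma \ref{connection} and the relation $\langle Q'(\phi_c),\partial_c\phi_c\rangle=\partial_cQ(\phi_c)=0$ from Lemma \ref{Qc}. That relation kills the $c^3\partial_c\phi_c$ term. The remaining terms involve only $\phi_c$ and $x\partial_x\phi_c$, namely
\[
\Big(c-\tfrac{c^2}{p-1}\Big)\,2Q(\phi_c)+\tfrac c2\langle Q'(\phi_c),x\partial_x\phi_c\rangle .
\]
The second pairing equals $\frac c2\big(-\frac12\|\phi_c\|_2^2+\frac12\|\partial_x\phi_c\|_2^2\big)$, which is explicit in $\|\phi_c\|_2^2$. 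I expect this to be nonzero at $c_p^-$. This sign check is the one place needing real computation, and it is the \textbf{main obstacle}. If the pairing did vanish, the parametrization would have to be changed.

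Once the determinant is nonzero, the implicit function theorem gives $C^1$ maps $(\lambda,y)$ near $(\phi_c,c,0)$. Translation invariance ($F(u(\cdot-s),\lambda,y+s)=F(u,\lambda,y)$) extends them to all of $U_\varepsilon(\phi_c)$, with $|\lambda-c|+\|\xi\|_{H^1}\lesssim\varepsilon$. Since $u\in C(H^1)$, composing with $t\mapsto u(t)$ gives functions of time.

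\textbf{Step 2: the modulation equations.} Write \eqref{pbbm} as $(1-\partial_x^2)u_t=-\partial_x E'(u)$. Substituting $u=\phi_\lambda(x-y)+\xi(x-y)$ and using $\partial_xE'(\phi_\lambda)=\lambda\partial_x Q'(\phi_\lambda)$ gives
\[
(1-\partial_x^2)\xi_t=-\dot\lambda(1-\partial_x^2)\partial_\lambda\phi_\lambda+(\dot y-\lambda)(1-\partial_x^2)\partial_x\phi_\lambda+\partial_x\big(\lambda(1-\partial_x^2)\xi-E'(\phi_\lambda+\xi)+E'(\phi_\lambda)\big)+\dot y(1-\partial_x^2)\partial_x\xi-\lambda(1-\partial_x^2)\partial_x\xi .
\]
The bracketed term is $\partial_x S_\lambda''(\phi_\lambda)\xi$ up to $O(\xi^2)$, up to sign. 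To work with it, apply $(1-\partial_x^2)^{-1}$, a bounded smoothing operator, and rearrange so that $\xi_t$ plus the remaining terms is expressed in $H^{-1}$ with coefficients $O(\|\xi\|_{H^1})$.

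Next, differentiate the orthogonality conditions \eqref{orth-condition} in time:
\[
\langle\xi_t,\partial_x\phi_\lambda\rangle+\dot\lambda\langle\xi,\partial_\lambda\partial_x\phi_\lambda\rangle=0,\qquad \langle\xi_t,\tau_\lambda\rangle+\dot\lambda\langle\xi,\partial_\lambda\tau_\lambda\rangle=0 .
\]
Inserting the equation for $\xi_t$ and integrating by parts moves all derivatives onto the smooth, exponentially decaying functions. This yields a linear system
\[
M(\lambda)\begin{pmatrix}\dot\lambda\\ \dot y-\lambda\end{pmatrix}=O(\|\xi\|_{H^1})+O\big(\|\xi\|_{H^1}(|\dot\lambda|+|\dot y-\lambda|)\big),
\]
where $M(\lambda)$ is a perturbation of the Jacobian above. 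Indeed, the leading pairings are $\langle\partial_\lambda\phi_\lambda,\partial_x\phi_\lambda\rangle$, $\langle\partial_x\phi_\lambda,\partial_x\phi_\lambda\rangle$, $\langle\partial_\lambda\phi_\lambda,\tau_\lambda\rangle$ and $\langle\partial_x\phi_\lambda,\tau_\lambda\rangle$, once the $(1-\partial_x^2)^{-1}$ is placed carefully or the pairings are taken after multiplying by $(1-\partial_x^2)$.

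By continuity and Step 1, $M(\lambda)$ is invertible with a bounded inverse for $\varepsilon$ small. Absorbing the last term on the right then gives \eqref{xit-yt}.
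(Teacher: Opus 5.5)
Your proposal is correct and follows the paper's proof step for step: the same implicit-function-theorem setup, whose Jacobian parity makes anti-diagonal with determinant $\|\partial_x\phi_c\|_2^2\langle\partial_c\phi_c,\tau_c\rangle$, followed by inserting the $\xi$-equation into the time-differentiated orthogonality conditions and inverting a small perturbation of that matrix. The one computation you left open does close, so you should carry it out rather than leave it as an expectation: using $\partial_cQ(\phi_c)=0$, the formula for $\|\partial_x\phi_c\|_2^2$ from Lemma \ref{connection}, and the endpoint relation $8(p+1)c^2-8(p-1)c-(p-1)^2=0$, your expression reduces to $\langle\partial_c\phi_c,\tau_c\rangle=\frac{8pc-6(p-1)}{4(p+3)}\|\phi_c\|_2^2$, which is strictly negative because $c=c_p^-<0$ and is exactly the value the paper obtains.
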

\begin{proof}
 Define $$\overrightarrow{q}:=(u;\lambda(t),y(t))$$
and
$$\overrightarrow{q_0}:=(\phi_c;c,0).$$

Let $\F(\q)=(F_1,F_2)$ with
$$F_1(\q):=\langle\xi, \partial_x\phi_\lambda\rangle,\quad F_2(\q):=\langle\xi, \tau_\lambda\rangle.$$
A direct computation at $\q=\q_0$ yields
\begin{align*}
  &\partial_\lambda F_1(\q)\Big|_{\q=\q_0}=0;\qquad \qquad\qquad \partial_y F_1(\q)\Big|_{\q=\q_0}=\|\partial_x\phi_c\|_2^2;\\
  &\partial_\lambda F_2(\q)\Big|_{\q=\q_0}=-\langle\partial_c\phi_c,\tau_c\rangle;\qquad
  \partial_y F_2(\q)\Big|_{\q=\q_0}=\langle\partial_x\phi_c,\tau_c\rangle.
\end{align*}
Consequently, the Jacobian matrix of $\F(\q)$ with respect to $(\lambda,y)$, evaluated at $\q=\q_0$, is given by
\begin{align*}
D\F(\q_0)&=\begin{pmatrix}
\partial_\lambda F_1&\partial_y F_1\\
\partial_\lambda F_2&\partial_y F_2
\end{pmatrix}\Biggl|_{\q=\q_0}\notag\\
&=\begin{pmatrix}
0&\|\partial_x\phi_c\|_2^2\\
-\langle\partial_c\phi_c,\tau_c\rangle&\langle\partial_x\phi_c,\tau_c\rangle
\end{pmatrix}.
\end{align*}
Indeed,  the determinant of the Jacobian matrix $D\F(\q_0)$ is given by
\begin{align*}
\det\big(D\F(\q_0)\big)=\|\partial_x\phi_c\|_2^2\langle\partial_c\phi_c,\tau_c\rangle\ne 0.
\end{align*}
Using the definition of $\tau_c$  together with  Lemma \ref{S''}, we compute
\begin{align*}
 \langle\partial_c\phi_c,\tau_c\rangle
 =&\langle\partial_c\phi_c,S''_c(\phi_c)\Gamma_c\rangle
 =\langle Q'(\phi_c),c^3\partial_c\phi_c-\frac{c^2}{p-1}\phi_c+\frac{c}{2}x\partial_x\phi_c+c\phi_c\rangle\\
 =&c^3\partial_cQ(\phi_c)+\langle \phi_c-\partial_x^2\phi_c,\frac{-c^2}{p-1}\phi_c+\frac{c}{2}x\partial_x\phi_c+c\phi_c\rangle.
\end{align*}
At $c=c_p^-$, Lemma \ref{Qc} gives $\partial_cQ(\phi_c)=0$. Thus, after  simplification,
$$\langle\partial_c\phi_c,\tau_c\rangle=\frac{8pc-6(p-1)}{4(p+3)}\|\phi_c\|_2^2\neq 0,$$
since $c=c_p^-<0$ and $p\geq 2$. Hence the determinant is nonzero.


The Implicit Function Theorem then ensures the existence of a unique pair of $C^1$ functions $(\lambda, y):\ U_\varepsilon(\phi_{c})\rightarrow\R^-\times\R$ such that \eqref{orth-condition} holds for all $t\geq0$. The quantitative form of the same argument gives \eqref{modulation-close}, uniformly in time as long as $u(t)\in U_\varepsilon(\phi_c)$.

We now turn to estimates on the derivatives. Rewriting $u$ as
$$
u(t,x)=(\phi_\lambda+\xi)(t,x-y(t)),
$$
and applying \eqref{pbbm}, we obtain
\begin{align}\label{ut1}
  u_t=\dot\xi+\dot\lambda\partial_\lambda\phi_\lambda-\dot y\partial_x(\phi_\lambda+\xi)=JE'(\phi_\lambda+\xi),
\end{align}
where $J=-(1-\partial_x^2)^{-1}\partial_x$. Adding $\lambda\partial_x(\phi_\lambda+\xi)$ to both sides of \eqref{ut1} yields
\begin{align*}
\dot\xi+\dot\lambda\partial_\lambda\phi_\lambda-(\dot y-\lambda)\partial_x(\phi_\lambda+\xi)
&=JE'(\phi_\lambda+\xi)+J\lambda\partial_x(\phi_\lambda+\xi)\\
&=J\left[E'(\phi_\lambda+\xi)-\lambda(1-\partial_x^2)(\phi_\lambda+\xi)\right].
\end{align*}
Since $Q'(\phi_\lambda+\xi)=(1-\partial_x^2)(\phi_\lambda+\xi)$, it follows that
\begin{align}\label{ut2}
\dot\xi+\dot\lambda\partial_\lambda\phi_\lambda-(\dot y-\lambda)\partial_x(\phi_\lambda+\xi)
&=J\left[E'(\phi_\lambda+\xi)-\lambda Q'(\phi_\lambda+\xi)\right]=JS'_\lambda(\phi_\lambda+\xi).
\end{align}
Using a Taylor expansion about $\phi_\lambda$ and recalling that $S'(\phi_\lambda)=0$, we have
\begin{align}\label{spx}
S'_\lambda(\phi_\lambda+\xi)=S'_\lambda(\phi_\lambda)+S''_\lambda(\phi_\lambda)\xi+O(\xi^2)
=S''_\lambda(\phi_\lambda)\xi+O(\xi^2).
\end{align}
Inserting \eqref{spx} into \eqref{ut2} gives
\begin{align}\label{ut3}
\dot\xi+\dot\lambda\partial_\lambda\phi_\lambda-(\dot y-\lambda)\partial_x(\phi_\lambda+\xi)
&=JS''_\lambda(\phi_\lambda)\xi+\mathcal{N}_1(\xi),
\end{align}
where the remainder $\N_1(\xi)$ satisfies
$$\langle \N_1(\xi), f\rangle=O(\|\xi\|_{\H1}^2\,\|f\|_{\H1}),\qquad f\in\H1(\R).$$
Taking the inner product of \eqref{ut3} with $\partial_x\phi_\lambda$ and $\tau_\lambda$, respectively, we obtain
\begin{align}
  \langle\dot\xi,\partial_x\phi_\lambda\rangle
  +\dot\lambda\langle\partial_\lambda\phi_\lambda,& \partial_x\phi_\lambda\rangle
-(\dot y-\lambda)\langle \partial_x(\phi_\lambda+\xi),\partial_x\phi_\lambda\rangle\nonumber\\
  &=\langle JS''_\lambda(\phi_\lambda)\xi, \partial_x\phi_\lambda\rangle+O(\|\xi\|_{\H1}^2),\label{ut-p}\\
\langle\dot\xi,\tau_\lambda\rangle
   +\dot\lambda\langle\partial_\lambda\phi_\lambda,\tau_\lambda\rangle
    -&(\dot y-\lambda)\langle \partial_x(\phi_\lambda+\xi),\tau_\lambda\rangle
=\langle JS''_\lambda(\phi_\lambda)\xi,\tau_\lambda\rangle+O(\|\xi\|_{\H1}^2).\label{ut-t}
\end{align}

Since $\phi_\lambda$ is even, it follows from its definition that  $\tau_\lambda$ is also even.
Using the orthogonality conditions \eqref{orth-condition}, we reduce \eqref{ut-p} and \eqref{ut-t} to
\begin{align}
-\dot \lambda\langle\xi,\partial_x\partial_\lambda\phi_\lambda\rangle
-(\dot y-\lambda)&\left[\|\partial_x\phi_\lambda\|_2^2-\langle\xi,\partial_x^2\phi_\lambda\rangle\right]\nonumber\\
&=\langle \xi, S''_\lambda(\phi_\lambda)(1-\partial_x^2)^{-1}\partial_x^2\phi_\lambda\rangle+O(\|\xi\|_{\H1}^2),\label{ut6}\\
\dot\lambda\left[-\langle \xi, \partial_\lambda\tau_\lambda\rangle+\langle\partial_\lambda\phi_\lambda, \tau_\lambda\rangle\right]&
+(\dot y-\lambda)\langle\xi, \partial_x\tau_\lambda\rangle\nonumber\\
&=\langle \xi, S''_\lambda(\phi_\lambda)(1-\partial_x^2)^{-1}\partial_x\tau_\lambda\rangle+O(\|\xi\|_{\H1}^2).\label{ut7}
\end{align}
Denote the matrix
\begin{align*}
A:=\begin{pmatrix}
-\langle\xi,\partial_x\partial_\lambda\phi_\lambda\rangle&
-\|\partial_x\phi_\lambda\|_2^2+\langle\xi,\partial_x^2\phi_\lambda\rangle\\
-\langle \xi, \partial_\lambda\tau_\lambda\rangle+\langle\partial_\lambda\phi_\lambda, \tau_\lambda\rangle&\langle\xi, \partial_x\tau_\lambda\rangle
\end{pmatrix}
\end{align*}
A direct computation then yields
\begin{align*}
\begin{pmatrix}
  \dot\lambda\\
  \dot y-\lambda
\end{pmatrix}
=A^{-1}
\begin{pmatrix}
 \langle \xi, S''_\lambda(\phi_\lambda)(1-\partial_x^2)^{-1}\partial_x^2\phi_\lambda\rangle\\
 \langle \xi, S''_\lambda(\phi_\lambda)(1-\partial_x^2)^{-1}\partial_x\tau_\lambda\rangle
\end{pmatrix}
+
\begin{pmatrix}
 O(\|\xi\|_{\H1}^2)\\
 O(\|\xi\|_{\H1}^2)
\end{pmatrix}
=\begin{pmatrix}
 O(\|\xi\|_{\H1})\\
 O(\|\xi\|_{\H1})
\end{pmatrix}.
\end{align*}
This finishes the proof of the proposition.
\end{proof}
Let $\rho\in C^\infty(\R)$ be a smooth cutoff
function satisfying
\begin{align*}
\rho(x)=
   \left\{ \aligned
    &1,\quad |x|\leq 1,\\
    &0,\quad |x|\geq 2.
   \endaligned
  \right.
\end{align*}
For $R>0$, set $\rho_R(x):=\rho(\frac{x}R)$. Define
\begin{align}\label{f}
  f_\lambda:=\rho_R(x)\int_0^x\Gamma_\lambda(z)\,dz,
\end{align}
where
$\Gamma_\lambda=\lambda^3\partial_\lambda\phi_\lambda-\frac{\lambda^2}{p-1}\phi_\lambda+\frac{\lambda}{2}x\partial_x\phi_\lambda+\lambda\phi_\lambda$.

We then have the following corollary.

\begin{cor}\label{bl}
Under the same assumptions as in Proposition \ref{modulation},  the following estimate holds:
\begin{align*}
  \dot y-\lambda=\frac{-1}{B(\lambda)}\partial_t\langle\xi, (1-\partial_x^2)f_\lambda\rangle
  +O(\|\xi\|_{\H1}^2+\frac{1}R),
\end{align*}
where
\[
B(\lambda):=\frac{8(p^2+1)\lambda^2-2(7p+1)(p-1)\lambda+5(p-1)^2}{4(p+3)(p-1)(\lambda-1)}\|\phi_\lambda\|_2^2.
\]

Moreover, there exist constants $\delta_B>0$ and $b_0>0$ such that
\begin{align}\label{B-lower}
|B(\lambda)|\ge b_0,
\qquad
|\lambda-c|<\delta_B.
\end{align}
\end{cor}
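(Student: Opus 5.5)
We will compute $\partial_t\langle\xi,(1-\partial_x^2)f_\lambda\rangle$ directly from the modulation equation \eqref{ut3}, and the statement will follow once we identify the coefficient of $\dot y-\lambda$ with $B(\lambda)$. By the product rule,
\[
\partial_t\langle\xi,(1-\partial_x^2)f_\lambda\rangle=\langle\dot\xi,(1-\partial_x^2)f_\lambda\rangle+\dot\lambda\langle\xi,(1-\partial_x^2)\partial_\lambda f_\lambda\rangle .
\]
The second term is $O(\|\xi\|_{H^1}^2)$ by \eqref{xit-yt}, provided $\|(1-\partial_x^2)\partial_\lambda f_\lambda\|_{L^2}$ is bounded uniformly in $R$. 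This needs care because $\int_0^x\Gamma_\lambda$ tends to the nonzero constants $\pm\frac12\int\Gamma_\lambda$ as $x\to\pm\infty$ (note $\Gamma_\lambda$ is even). However, $(1-\partial_x^2)f_\lambda$ contains the terms $\rho_R\int_0^x\Gamma_\lambda$, which have $L^2$ norm of order $R^{1/2}$. So we pair $\xi$ only after moving $(1-\partial_x^2)$ onto the equation. Concretely, we substitute \eqref{ut3} in the form
\[
(1-\partial_x^2)\dot\xi=-\dot\lambda(1-\partial_x^2)\partial_\lambda\phi_\lambda+(\dot y-\lambda)(1-\partial_x^2)\partial_x(\phi_\lambda+\xi)-\partial_xS_\lambda''(\phi_\lambda)\xi+(1-\partial_x^2)\mathcal N_1(\xi),
\]
using $J=-(1-\partial_x^2)^{-1}\partial_x$, and then pair with $f_\lambda$.

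The resulting terms are handled one at a time.

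\textbf{(a) Linear term.} We have $\langle-\partial_xS''_\lambda\xi,f_\lambda\rangle=\langle\xi,S''_\lambda(\phi_\lambda)\partial_xf_\lambda\rangle$. Here $\partial_xf_\lambda=\rho_R\Gamma_\lambda+\rho_R'\int_0^x\Gamma_\lambda$. The first piece gives $\langle\xi,S''_\lambda\Gamma_\lambda\rangle-\langle\xi,S''_\lambda((1-\rho_R)\Gamma_\lambda)\rangle=0+O(e^{-cR}\|\xi\|)$, by the orthogonality $\langle\xi,\tau_\lambda\rangle=0$. The second piece has $\|\rho_R'\|_\infty\lesssim R^{-1}$ on a region where $\phi_\lambda$ is exponentially small. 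There $S''_\lambda\approx\lambda\partial_x^2+(1-\lambda)$, and since $\rho_R'\int_0^x\Gamma_\lambda$ has $L^2$ norm $O(R^{-1/2})$ (together with its derivatives), this piece is $O(R^{-1/2}\|\xi\|_{H^1})=O(\|\xi\|^2+R^{-1})$.

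\textbf{(b) The $\dot\lambda$ term.} Here $\dot\lambda\langle\partial_\lambda\phi_\lambda,(1-\partial_x^2)f_\lambda\rangle$ needs separate treatment, since $\dot\lambda$ is only $O(\|\xi\|)$. Integrating by parts, $\langle Q'(\partial_\lambda\phi_\lambda),f_\lambda\rangle$ is an odd/even pairing: $\partial_\lambda\phi_\lambda$ is even while $f_\lambda$ is odd. So this term vanishes identically.

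\textbf{(c) Nonlinear and $\xi$-dependent terms.} The terms $(\dot y-\lambda)\langle(1-\partial_x^2)\partial_x\xi,f_\lambda\rangle$ and the $\mathcal N_1$ term are moved onto $\partial_xf_\lambda$-type functions, which are bounded in $H^1$ uniformly in $R$ up to $O(R^{-1/2})$. They are therefore $O(\|\xi\|^2)$ by \eqref{xit-yt} and the bound on $\mathcal N_1$. For the $\mathcal N_1$ term, $\langle\mathcal N_1,(1-\partial_x^2)f_\lambda\rangle$ is rewritten as $\langle JS'-JS''\xi,\cdot\rangle$ with $J$ absorbing the derivative.

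\textbf{(d) Main coefficient.} The remaining main term is
\[
(\dot y-\lambda)\langle Q'(\partial_x\phi_\lambda),f_\lambda\rangle=-(\dot y-\lambda)\langle Q'(\phi_\lambda),\partial_xf_\lambda\rangle=-(\dot y-\lambda)\big(\langle Q'(\phi_\lambda),\Gamma_\lambda\rangle+O(e^{-cR})\big).
\]
Thus $B(\lambda)$ should equal $\langle\phi_\lambda-\partial_x^2\phi_\lambda,\Gamma_\lambda\rangle$. This is the same computation as $\langle\partial_c\phi_c,\tau_c\rangle$ in Proposition \ref{modulation}, but now at general $\lambda$, where $\lambda^3\partial_\lambda Q(\phi_\lambda)$ is kept rather than set to zero. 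Using Lemma \ref{connection} and the formula for $\partial_\lambda Q$ in Lemma \ref{Qc}, we combine everything over $4(p+3)(p-1)(\lambda-1)$ and check that the numerator is $8(p^2+1)\lambda^2-2(7p+1)(p-1)\lambda+5(p-1)^2$. Solving for $\dot y-\lambda$ then gives the stated identity; dividing by $B$ is harmless once \eqref{B-lower} holds.

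\textbf{(e) Lower bound \eqref{B-lower}.} The quadratic $8(p^2+1)\lambda^2-2(7p+1)(p-1)\lambda+5(p-1)^2$ is strictly positive for $\lambda<0$, since every term is positive there. Also $\lambda-1<0$, and $\|\phi_\lambda\|_2$ is continuous and nonzero. Hence $B(c)\neq0$, and continuity gives $\delta_B$ and $b_0$.

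The main obstacle is the $R$-uniformity in (a) and (c): $f_\lambda$ grows like $R^{1/2}$ in $L^2$. Every pairing must therefore be rearranged so that only $\partial_xf_\lambda$ meets $\xi$, and the cutoff derivative region must contribute only $O(R^{-1/2}\|\xi\|)$. A secondary risk is the algebraic verification of the exact polynomial in $B$.
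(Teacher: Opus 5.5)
Your route is the paper's, step for step. You pair \eqref{ut3} with $(1-\partial_x^2)f_\lambda$, kill the $\dot\lambda\,\partial_\lambda\phi_\lambda$ term by parity, and move the derivative onto $f_\lambda$, so that $\langle\xi,\tau_\lambda\rangle=0$ removes the linear term up to $O(R^{-1/2}\|\xi\|_{H^1})$. You then identify $B(\lambda)=\langle Q'(\phi_\lambda),\Gamma_\lambda\rangle$. Steps (a)--(e) are correct uniformly in $R$, and the polynomial checks out.

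\textbf{The gap.} The product-rule term $\dot\lambda\langle\xi,(1-\partial_x^2)\partial_\lambda f_\lambda\rangle$ is flagged but never closed; it, not (a) or (c), is the real obstacle. Moving $(1-\partial_x^2)$ onto the equation only treats $\langle\dot\xi,(1-\partial_x^2)f_\lambda\rangle$. This term instead comes from differentiating the weight, so $\xi$ must meet the undifferentiated primitive $\partial_\lambda f_\lambda=\rho_R\int_0^x\partial_\lambda\Gamma_\lambda$.

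The proviso you state fails. From the explicit profile, $\int_{\mathbb R}\Gamma_\lambda=\frac{\lambda(2\lambda-p+1)}{2(p-1)(\lambda-1)}\int_{\mathbb R}\phi_\lambda$. Its logarithmic $\lambda$-derivative is $-\frac{3}{2\lambda(\lambda-1)}+\frac{1}{(p-1)(\lambda-1)}+\frac{2}{2\lambda-p+1}$, strictly negative for $\lambda<0$. So $\partial_\lambda f_\lambda$ is close to the nonzero constants $\pm\frac12\partial_\lambda\int_{\mathbb R}\Gamma_\lambda$ on $1\ll|x|\le R$, and $\|\partial_\lambda f_\lambda\|_{L^2}\sim R^{1/2}$.

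Nor is $\dot\lambda$ better than first order. The linear term $\langle\xi,S''_\lambda(\phi_\lambda)(1-\partial_x^2)^{-1}\partial_x\tau_\lambda\rangle$ in the $\tau_\lambda$-equation does not vanish on admissible $\xi$, because $\tau_\lambda$ is not a multiple of $Q'(\phi_\lambda)$. Hence the term is only $O(R^{1/2}\|\xi\|_{H^1}^2)$, and is genuinely that large for admissible $\xi$ with a component spread over $|x|\lesssim R$. It is not $O(\|\xi\|_{H^1}^2+R^{-1})$ uniformly in $R$, and that uniformity is the whole content of the statement, since $R$ is later taken $\ge a^{-2}$. The paper disposes of this term in one line using only $\dot\lambda=O(\|\xi\|_{H^1})$, so it has the same hole; you reproduced it while correctly sensing the danger.

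\textbf{What you actually prove, and a repair.} Your steps establish, uniformly in $R$, the identity with $\langle\dot\xi,(1-\partial_x^2)f_\lambda\rangle$ in place of $\partial_t\langle\xi,(1-\partial_x^2)f_\lambda\rangle$. To get an honest time derivative, freeze the weight at the endpoint, $f_c=\rho_R\int_0^x\Gamma_c$. Then:
\begin{itemize}
\item $\partial_t\langle\xi,(1-\partial_x^2)f_c\rangle=\langle\dot\xi,(1-\partial_x^2)f_c\rangle$ exactly;
\item parity still removes the $\dot\lambda$ term;
\item the coefficient becomes $\langle Q'(\phi_\lambda),\Gamma_c\rangle=B(c)+O(|\lambda-c|)$;
\item the linear term becomes $\langle\xi,S''_\lambda(\phi_\lambda)(\Gamma_c-\Gamma_\lambda)\rangle=O(|\lambda-c|\,\|\xi\|_{H^1})$.
\end{itemize}
This gives $\dot y-\lambda=-B(c)^{-1}\partial_t\langle\xi,(1-\partial_x^2)f_c\rangle+O(\|\xi\|_{H^1}^2+|\lambda-c|\,\|\xi\|_{H^1}+R^{-1})$, which is not the stated form.

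However, suppose $I_2$ also uses the frozen coefficient $D(c)/B(c)$; this removes the analogous $R^{1/2}$-sized term $\dot\lambda\,\partial_\lambda(D/B)\langle\xi,(1-\partial_x^2)f_\lambda\rangle$ there. Then the extra error satisfies $|\lambda-c|\,\|\xi\|_{H^1}\le\eta(\lambda-c)^2+C_\eta\|\xi\|_{H^1}^2$ and can be absorbed in Section 5. This appears to be what the introduction describes (``fixed endpoint direction $\Gamma_c$'', ``fixed-coefficient correction'').
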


\begin{proof}
Taking the inner product of \eqref{ut3} with $ (1-\partial_x^2)f_\lambda$ and integrating by parts, we obtain
\begin{align}\label{ut-f}
\langle\dot\xi, (1-\partial_x^2)f_\lambda\rangle
+&\dot\lambda\langle\partial_\lambda\phi_\lambda,(1-\partial_x^2)f_\lambda\rangle
-(\dot y-\lambda)\langle \partial_x(\phi_\lambda+\xi),(1-\partial_x^2)f_\lambda\rangle \nonumber\\
&=\langle JS''_\lambda(\phi_\lambda)\xi, (1-\partial_x^2)f_\lambda\rangle+O(\|\xi\|_{\H1}^2).
\end{align}

Since $\phi_\lambda$ is exponentially decaying and even, it follows that $f_\lambda\in L^2(\R)$ and is odd. Consequently, $\langle\partial_\lambda\phi_\lambda,(1-\partial_x^2)f_\lambda\rangle=0$.

Furthermore, from the rough estimate $\dot\lambda=O(\|\xi\|_{\H1})$ in  Proposition \ref{modulation}, we have
\begin{align*}
\langle\dot\xi, (1-\partial_x^2)f_\lambda\rangle=&\partial_t\langle\xi, (1-\partial_x^2)f_\lambda\rangle
-\dot\lambda\langle\xi, (1-\partial_x^2)\partial_\lambda f_\lambda\rangle\nonumber\\
=&\partial_t\langle\xi, (1-\partial_x^2)f_\lambda\rangle+O(\|\xi\|_{\H1}^2).
\end{align*}
Substituting this into \eqref{ut-f} yields
\begin{align}\label{xit}
\partial_t\langle\xi, (1-\partial_x^2)f_\lambda\rangle
-&(\dot y-\lambda)\langle \partial_x(\phi_\lambda+\xi),(1-\partial_x^2)f_\lambda\rangle\nonumber \\
&=\langle JS''_\lambda(\phi_\lambda)\xi, (1-\partial_x^2)f_\lambda\rangle
+O(\|\xi\|_{\H1}^2).
\end{align}
Next, we expand the terms in \eqref{xit}. Since $J=-(1-\partial_x^2)^{-1}\partial_x$ is skew-adjoint, we have
$$\langle JS''_\lambda(\phi_\lambda)\xi, (1-\partial_x^2)f_\lambda\rangle
=-\langle \partial_xS''_\lambda(\phi_\lambda)\xi,f_\lambda\rangle.$$
Thus \eqref{xit} becomes
\begin{align}\label{xit'}
\partial_t\langle\xi, (1-\partial_x^2)f_\lambda\rangle
-&(\dot y-\lambda)\langle \partial_x(\phi_\lambda+\xi),(1-\partial_x^2)f_\lambda\rangle\nonumber \\
&=-\langle \partial_xS''_\lambda(\phi_\lambda)\xi,f_\lambda\rangle
+O(\|\xi\|_{\H1}^2).
\end{align}
Furthermore, applying integration by parts, we obtain
\begin{align}
-&(\dot y-\lambda)\langle \partial_x(\phi_\lambda+\xi),(1-\partial_x^2)f_\lambda\rangle\nonumber \\
&=-(\dot y-\lambda)\langle \partial_x(\phi_\lambda+\xi),(1-\partial_x^2)\rho_R(x)
\int_0^x\Gamma_\lambda(z)\,dz\rangle\nonumber \\
&=(\dot y-\lambda)\langle (1-\partial_x^2)(\phi_\lambda+\xi),\Gamma_\lambda\rangle
-(\dot y-\lambda)\langle \partial_x(\phi_\lambda+\xi),(1-\partial_x^2)(\rho_R(z)-1)\int_0^x\Gamma_\lambda(z)\,dz\rangle\nonumber \\
&=(\dot y-\lambda)\langle (1-\partial_x^2)\phi_\lambda,\Gamma_\lambda\rangle
-(\dot y-\lambda)\langle (1-\partial_x^2)\partial_x\phi_\lambda,(\rho_R(z)-1)\int_0^x\Gamma_\lambda(z)\,dz\rangle
+\mathcal{R}_1,\nonumber
\end{align}
where the remainder $\mathcal{R}_1$ is given by
$$\mathcal{R}_1=(\dot y-\lambda)\langle\xi,(1-\partial_x^2)\Gamma_\lambda\rangle
-(\dot y-\lambda)\langle \partial_x\xi,(1-\partial_x^2)(\rho_R(z)-1)\int_0^x\Gamma_\lambda(z)\,dz\rangle.$$
Finally, applying the identities \eqref{xphi}, \eqref{pphi} and \eqref{cphi},  together with a straightforward calculation, yields
\begin{align}
\langle (1-\partial_x^2)\phi_\lambda,\Gamma_\lambda\rangle
=&\langle (1-\partial_x^2)\phi_\lambda,\lambda^3\partial_\lambda\phi_\lambda-\frac{\lambda^2}{p-1}\phi_\lambda+\frac{\lambda}{2}x\partial_x\phi_\lambda+\lambda\phi_\lambda\rangle\nonumber \\
=&\frac{\lambda^3}{2}\partial_\lambda\|\phi_\lambda\|_2^2-\frac{\lambda^2}{p-1}\|\phi_\lambda\|_2^2
-\frac{\lambda}{4}\|\phi_\lambda\|_2^2+\lambda\|\phi_\lambda\|_2^2\nonumber \\
&+\frac{\lambda^3}{2}\partial_\lambda\|\partial_x\phi_\lambda\|_2^2-\frac{\lambda^2}{p-1}\|\partial_x\phi_\lambda\|_2^2
+\frac{\lambda}{4}\|\partial_x\phi_\lambda\|_2^2+\lambda\|\partial_x\phi_\lambda\|_2^2\nonumber \\
=&\frac{8(p^2+1)\lambda^2-2(7p+1)(p-1)\lambda+5(p-1)^2}{4(p+3)(p-1)(\lambda-1)}\|\phi_\lambda\|_2^2\nonumber \\
=&B(\lambda).\label{xit-1}
\end{align}

From the estimate $|\dot y-\lambda|=O(\|\xi\|_{\H1})$ in Proposition \ref{modulation}, together with the fact that $\int_0^x\Gamma_\lambda(z)\,dz\in L^\infty(\R)$, we have
\begin{align}
|(\dot y-\lambda)\langle (1-\partial_x^2)\partial_x\phi_\lambda,&(\rho_R(x)-1)\int_0^x\Gamma_\lambda(z)\,dz\rangle|\nonumber\\
&\lesssim\|\xi\|_{\H1}^2+\int_{|x|>R}\big|(1-\partial_x^2)\partial_x\phi_\lambda\big|^2\,dx\nonumber\\
&= O(\|\xi\|_{\H1}^2+\frac{1}R)\label{xit-2}.
\end{align}
Moreover, we have
\begin{align}
\big| \mathcal{R}_1\big|\lesssim\|\xi\|_{\H1}^2.\label{xit-3}
\end{align}
Combining \eqref{xit-1}, \eqref{xit-2} and \eqref{xit-3}, we obtain
\begin{align}\label{xit4}
-(\dot y-\lambda)\langle \partial_x(\phi_\lambda+\xi),(1-\partial_x^2)f_\lambda\rangle
=B(\lambda)(\dot y-\lambda)+O(\|\xi\|_{\H1}^2+\frac{1}R)
\end{align}

Using integration by parts and the orthogonality condition $\langle\xi, \tau_\lambda\rangle=0$  in \eqref{orth-condition}, we obtain
\begin{align}
-\langle \partial_xS''_\lambda(\phi_\lambda)\xi,f_\lambda\rangle
&=-\langle \partial_xS''_\lambda(\phi_\lambda)\xi,\rho_R(x)\int_0^x\Gamma_\lambda(z)\,dz\rangle\nonumber\\
&=-\langle \partial_xS''_\lambda(\phi_\lambda)\xi,\int_0^x\Gamma_\lambda(z)\,dz\rangle
+\mathcal{R}_2\nonumber\\
&=\langle\xi, S''_\lambda(\phi_\lambda)\Gamma_\lambda\rangle+\mathcal{R}_2\nonumber\\
&=\langle\xi, \tau_\lambda\rangle+\mathcal{R}_2\nonumber\\
&=\mathcal{R}_2,\nonumber
\end{align}
where
$$\mathcal{R}_2
:=-\langle \partial_xS''_\lambda(\phi_\lambda)\xi,(\rho_R(x)-1)\int_0^x\Gamma_\lambda(z)\,dz\rangle.$$
Since $\partial_x[(\rho_R-1)F_\lambda]=\rho_R'F_\lambda+(\rho_R-1)\Gamma_\lambda$, where $F_\lambda(x)=\int_0^x\Gamma_\lambda(z)\,dz$, integration by parts in the distributional duality gives
\[
\mathcal R_2=\big\langle S''_\lambda(\phi_\lambda)\xi,
\rho_R'F_\lambda+(\rho_R-1)\Gamma_\lambda\big\rangle.
\]
The first coefficient is supported in $R\le |x|\le2R$ and is $O(R^{-1})$, while the second one is exponentially small on $|x|\ge R$. Since $S''_\lambda(\phi_\lambda):H^1\to H^{-1}$ is uniformly bounded for $\lambda$ near $c$, Young's inequality yields
\begin{align*}
|\mathcal R_2|
&\lesssim \|\xi\|_{H^1}\left(R^{-1/2}+e^{-\nu R}\right)\\
&\lesssim \|\xi\|_{H^1}^2+\frac{1}R
\end{align*}
for some $\nu>0$, uniformly for $\lambda$ in a sufficiently small neighborhood of $c$. Consequently,
\begin{align}\label{xit5}
  -\langle \partial_xS''_\lambda(\phi_\lambda)\xi,f_\lambda\rangle= O(\|\xi\|_{\H1}^2+\frac{1}R).
\end{align}

Finally, combining \eqref{xit'},\eqref{xit4} and \eqref{xit5}, we obtain
$$\dot y-\lambda=\frac{-1}{B(\lambda)}\partial_t\langle\xi, (1-\partial_x^2)f_\lambda\rangle
  +O(\|\xi\|_{\H1}^2+\frac{1}R).$$

Since $c=c_p^-<0$, the numerator of $B(c)$ is strictly positive,
whereas $c-1<0$. Hence $B(c)<0$. By continuity, there exist
$\delta_B>0$ and $b_0>0$ such that
\[
|B(\lambda)|\ge b_0,
\qquad |\lambda-c|<\delta_B.
\]
Thus, division by $B(\lambda)$ is justified uniformly in this
neighborhood.
\end{proof}

\section{Instability}

\subsection{Virial estimates}
To prove the main theorem, we  first establish the following virial estimates. Let $\varphi\in C^\infty(\mathbb{R})$ be a smooth cutoff function satisfying
\begin{align*}
\varphi(x)=
   \left\{ \aligned
    &x,\quad |x|\leq R,\\
    &0,\quad |x|\geq 2R,
   \endaligned
  \right.
\end{align*}
and $0\leq|\varphi'(x)|\lesssim1$ for any $x\in \R$. $R>0$ is a large parameter to be chosen later.

Define
$$I_1(t)=\int_{\R}\varphi(x-y(t))(\frac{1}{2}u^2+\frac{\kappa}{2} u^{p+1})\,dx.$$

From \eqref{pbbm},  we can rewrite the time evolution as
\begin{align}\label{uth}
u_t
=\partial_x H(u),
\end{align}
where $H(u):=-(1-\partial_x^2)^{-1}[u+\frac{\kappa(p+1)}{2}u^p]$. Then, we also have
\begin{align}\label{uph}
u+\frac{\kappa(p+1)}{2}u^p=-(1-\partial_x^2)H(u)=-H(u)+\partial_x^2H(u).
\end{align}
Recall that the solitary wave profile $\phi_\lambda$ satisfies \eqref{phie}, which can be rewritten as
$$\phi_\lambda=\lambda(1-\partial_x^2)\phi_\lambda-\kappa\frac{p+1}{2}\phi_\lambda^p.$$

The second-order operator $S''_\lambda(\phi_\lambda)$ is given by
$$S_\lambda''(\phi_\lambda)f=\lambda\partial_x^2f+(1-\lambda)f+\kappa\frac{p(p+1)}{2}\phi_\lambda^{p-1}f.$$
Thus, the perturbation $\xi$ can be expressed as
$$\xi=S_\lambda''(\phi_{\lambda})\xi+\lambda(1-\partial_x^2)\xi-\kappa\frac{p(p+1)}{2}\phi_{\lambda}^{p-1}\xi.$$
Combining the above identities, we obtain
\begin{align}\label{phi+xi}
\phi_\lambda+\xi=S_\lambda''(\phi_{\lambda})\xi
+\lambda(1-\partial_x^2)(\phi_\lambda+\xi)
-\kappa\frac{p+1}{2}\phi_\lambda^p-\kappa\frac{p(p+1)}{2}\phi_{\lambda}^{p-1}\xi,
\end{align}
and
\begin{align}\label{hpx}
H(u)=-\lambda(\phi_\lambda+\xi)-(1-\partial_x^2)^{-1}S''_\lambda(\phi_\lambda)\xi-(1-\partial_x^2)^{-1}\mathcal{N}_2(\xi),
\end{align}
where
$$\mathcal{N}_2(\xi):=\kappa\frac{p+1}{2}[(\phi_{\lambda}+\xi)^{p}
-\phi_\lambda^p
-p\phi_{\lambda}^{p-1}\xi]=O(|\xi|^2).$$

We have the following proposition.
\begin{prop}
Assume $u(t,x)=(\phi_\lambda+\xi)(x-y(t))$. Then
\begin{align*}
 I_1'(t)
 = &-(\dot y-\lambda)\big[E(u)+2\lambda\|\partial_x\phi_\lambda\|_2^2\big]-\lambda E(u)\\
 &+\frac{\lambda^2}{2}[\|\phi_\lambda\|_2^2-\|\partial_x\phi_\lambda\|_2^2]
 +O(\|\xi\|_{\H1}^2+\frac{1}R)
\end{align*}

\end{prop}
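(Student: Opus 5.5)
The plan is to differentiate $I_1$ directly, then use the Hamiltonian form \eqref{uth}--\eqref{uph} of the equation to turn the flux into a boundary-free quadratic expression in $H(u)$. Finally, I would insert the decomposition \eqref{hpx} and expand in $\xi$.

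\textbf{Step 1: differentiate and rewrite the flux.} Writing $e(u)=\frac12u^2+\frac\kappa2u^{p+1}$, we have
\[
I_1'(t)=-\dot y\int_{\mathbb R}\varphi'(x-y)e(u)\,dx+\int_{\mathbb R}\varphi(x-y)\Big(u+\tfrac{\kappa(p+1)}{2}u^p\Big)u_t\,dx .
\]
In the second integral I substitute $u_t=\partial_xH$ and $u+\frac{\kappa(p+1)}2u^p=-H+\partial_x^2H$. Integrating by parts then gives
\[
\tfrac12\int\varphi'(x-y)\big(H^2-(\partial_xH)^2\big)\,dx .
\]
Since $\varphi'=1$ on $|x|\le R$ and $|\varphi'|\lesssim1$, I replace $\varphi'(x-y)$ by $1$ in both integrals. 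The parts involving $\phi_\lambda$ then cost $O(e^{-\nu R})$. The parts quadratic in $\xi$ cost $O(\|\xi\|_{H^1}^2)$, using $|\dot y|\lesssim1$ from \eqref{xit-y}. The one delicate piece is a cross term $\phi_\lambda\xi$ on $|x|\ge R$. It is bounded by $e^{-\nu R}\|\xi\|_{H^1}\lesssim\|\xi\|_{H^1}^2+R^{-1}$. This yields
\[
I_1'=-\dot yE(u)+\tfrac12\big(\|H\|_2^2-\|\partial_xH\|_2^2\big)+O\big(\|\xi\|_{H^1}^2+R^{-1}\big),
\]
and I split $-\dot yE(u)=-(\dot y-\lambda)E(u)-\lambda E(u)$.

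\textbf{Step 2: expand the $H$-term.} By \eqref{hpx}, $H=-\lambda(\phi_\lambda+\xi)-K+O(\xi^2)$ with $K=(1-\partial_x^2)^{-1}S''_\lambda(\phi_\lambda)\xi$. The zeroth-order part is $\frac{\lambda^2}{2}(\|\phi_\lambda\|_2^2-\|\partial_x\phi_\lambda\|_2^2)$. The terms linear in $\xi$ are
\[
\lambda^2\langle\xi,(1+\partial_x^2)\phi_\lambda\rangle+\lambda\langle\phi_\lambda,(1+\partial_x^2)K\rangle .
\]
Using $(1+\partial_x^2)=2-(1-\partial_x^2)$ and self-adjointness, these become explicit pairings $\langle\xi,g_\lambda\rangle$. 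Here $g_\lambda$ is an even, exponentially decaying combination of $\phi_\lambda$, $\partial_x^2\phi_\lambda$, $\phi_\lambda^p$ and $S''_\lambda(\phi_\lambda)(1-\partial_x^2)^{-1}\phi_\lambda$. All quadratic terms and the $O(\xi^2)$ part of $\mathcal N_2$ go into the error.

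\textbf{Step 3 (main obstacle): remove the linear term.} The linear term $\langle\xi,g_\lambda\rangle$ is not killed by \eqref{orth-condition} alone. The statement says it must instead produce $-2\lambda(\dot y-\lambda)\|\partial_x\phi_\lambda\|_2^2$ up to quadratic errors. I would first try to write $g_\lambda$ in the form
\[
g_\lambda=S''_\lambda(\phi_\lambda)h_\lambda+a\,\tau_\lambda
\]
for an explicit $h_\lambda$ built from $x\partial_x\phi_\lambda$, $\phi_\lambda$ and $\partial_\lambda\phi_\lambda$. The identities of Lemma \ref{S''} and the Corollary $S''(\lambda\partial_\lambda\phi-\frac1{p-1}\phi)=\phi$ are the tools for this. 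The $\tau_\lambda$ component then vanishes by orthogonality. For the remaining $\langle S''_\lambda\xi,h_\lambda\rangle$, I would pair the perturbation equation \eqref{ut3} with a suitable test function, as in \eqref{ut6}. That expresses it through $(\dot y-\lambda)\langle\partial_x\phi_\lambda,\cdot\rangle$, with $\langle\partial_x\phi_\lambda,\partial_x\phi_\lambda\rangle=\|\partial_x\phi_\lambda\|_2^2$. I expect the relation $S''_\lambda(x\partial_x\phi_\lambda)=2\lambda\partial_x^2\phi_\lambda$ to produce the factor $2\lambda$.

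If this matching fails, the fallback is to check whether the coefficient multiplying $(\dot y-\lambda)$ is exactly what remains after the substitution $E(u)=E(\phi_\lambda)+\langle E'(\phi_\lambda),\xi\rangle+O(\xi^2)$. In that case the linear term would be absorbed by rewriting $-\lambda E(u)$ rather than by modulation. Verifying this bookkeeping is the step I expect to be hardest; the rest is routine once it closes.
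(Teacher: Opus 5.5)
Your Steps 1 and 2 follow the paper's proof: the flux identity $\tfrac12\int\varphi'(H^2-(\partial_xH)^2)$, the replacement of $\varphi'$ by $1$, and the expansion of $H$ via \eqref{hpx}. Your list of linear terms is also correct. The gap is Step 3, which carries the entire content of the proposition, and the route you sketch for it cannot close.

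Since $g_\lambda$ and $\tau_\lambda$ are even, $h_\lambda$ can be taken even. Then the test function $\psi$ with $(1-\partial_x^2)^{-1}\partial_x\psi=h_\lambda$ is odd. Pairing \eqref{ut3} with $\psi$ avoids a first-order term $\partial_t\langle\xi,\psi\rangle$ only if $\langle\xi,\psi\rangle\equiv0$, i.e.\ $\psi\propto\partial_x\phi_\lambda$. Any other choice produces the time-derivative correction that the paper handles separately (Corollary \ref{bl} and $I_2$), not the identity claimed here. So $h_\lambda$ must be a multiple of $(1-\partial_x^2)^{-1}\partial_x^2\phi_\lambda$, the function in \eqref{ut6}. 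That function is nonlocal and is not a combination of $\phi_\lambda$ and $x\partial_x\phi_\lambda$; by \eqref{phic} these two also span $\partial_\lambda\phi_\lambda$. Hence your ansatz for $h_\lambda$ has no solution, and $S''_\lambda(\phi_\lambda)(x\partial_x\phi_\lambda)=2\lambda\partial_x^2\phi_\lambda$ is not the source of the factor $2\lambda$. The fallback is vacuous: $-\lambda E(u)$ appears unexpanded on both sides (you created it by splitting $-\dot yE(u)$), so expanding $E(u)$ cannot absorb a term of size $\|\xi\|_{H^1}$.

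What is missing is an exact splitting of your linear coefficient. Write $(1+\partial_x^2)\phi_\lambda=(1-\partial_x^2)\phi_\lambda+2\partial_x^2\phi_\lambda$ instead of using $2-(1-\partial_x^2)$. With your $K=(1-\partial_x^2)^{-1}S''_\lambda(\phi_\lambda)\xi$, this gives
\[
\lambda\langle\phi_\lambda,(1+\partial_x^2)K\rangle
=\lambda\langle\xi,S''_\lambda(\phi_\lambda)\phi_\lambda\rangle
+2\lambda\langle\xi,S''_\lambda(\phi_\lambda)(1-\partial_x^2)^{-1}\partial_x^2\phi_\lambda\rangle .
\]
The first piece combines with $\lambda^2\langle\xi,(1+\partial_x^2)\phi_\lambda\rangle$ into $\langle\xi,\lambda^2\phi_\lambda+\lambda^2\partial_x^2\phi_\lambda+\lambda S''_\lambda(\phi_\lambda)\phi_\lambda\rangle=\langle\xi,\tau_\lambda\rangle=0$. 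This is just the explicit formula for $\tau_\lambda$, and it is the reason $\Gamma_\lambda$ was chosen as it was. For the second piece, use \eqref{ut6} together with $\langle\xi,\partial_x\phi_\lambda\rangle=0$, parity, and the rough bounds $\dot\lambda,\ \dot y-\lambda=O(\|\xi\|_{H^1})$. It then equals $-2\lambda(\dot y-\lambda)\|\partial_x\phi_\lambda\|_2^2+O(\|\xi\|_{H^1}^2)$, which is exactly the extra term in the statement. In your notation, $a=1$ and $h_\lambda=2\lambda(1-\partial_x^2)^{-1}\partial_x^2\phi_\lambda$. The paper organizes the same computation by letting $\tfrac12\|H\|_2^2$ and $-\tfrac12\|\partial_xH\|_2^2$ each contribute one copy of $-\lambda(\dot y-\lambda)\|\partial_x\phi_\lambda\|_2^2$.
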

\begin{proof}
From the definitions of $I_1(t)$ and  $E(u)$,  differentiating $ I_1(t)$ with respect to $t$ yields
\begin{align*}
I'_1(t)=&-\dot y\int_{\R}\varphi'(x-y(t))(\frac{1}{2}u^2+\frac{\kappa}{2} u^{p+1})\,dx
+\int_{\R}\varphi(x-y(t))\partial_t(\frac{1}{2}u^2+\frac{\kappa}{2} u^{p+1})\,dx.\\
=&-\dot y E(u)
-\dot y\int_{\R}\left[\varphi'(x-y(t))-1\right](\frac{1}{2}u^2+\frac{\kappa}{2} u^{p+1})\,dx\\
&+ \int_{\R}\varphi(x-y(t))[u+\frac{\kappa(p+1)}{2} u^{p}]u_t\,dx.
\end{align*}
Using \eqref{uth} and \eqref{uph}, we have
\begin{align*}
\int_{\R}\varphi(x-y(t))&[u+\frac{\kappa(p+1)}{2} u^{p}]u_t\,dx
=\int_{\R}\varphi(x-y(t))[-H(u)+\partial_x^2H(u)]\partial_xH(u)\,dx\\
=&-\frac{1}{2}\int_{\R}\varphi(x-y(t))\partial_x[H^2(u)-(\partial_xH(u))^2]\,dx\\
=&\frac{1}{2}\int_{\R}\varphi'(x-y(t))[H^2(u)-(\partial_xH(u))^2]\,dx\\
=&\frac{1}{2}\int_{\R}H^2(u)\,dx-\frac{1}{2}\int_{\R}(\partial_xH(u))^2\,dx\\
&+\frac{1}{2}\int_{\R}[\varphi'(x-y(t))-1][H^2(u)-(\partial_xH(u))^2]\,dx
\end{align*}
Combining the above identities, we obtain
\begin{align}\label{i1t}
I'_1(t)=-\dot yE(u)+\frac{1}{2}\int_{\R}H^2(u)\,dx-\frac{1}{2}\int_{\R}(\partial_xH(u))^2\,dx+\mathcal{R}_3,
\end{align}
where
$$\mathcal{R}_3:=-\dot y\int_{\R}\left[\varphi'(x-y(t))-1\right](\frac{1}{2}u^2+\frac{\kappa}{2} u^{p+1})\,dx
+\frac{1}{2}\int_{\R}[\varphi'(x-y(t))-1][H^2(u)-(\partial_xH(u))^2]\,dx.$$

We now expand the terms in \eqref{i1t} separately. Applying \eqref{hpx}, we derive
  \begin{align}
\int_{\R}H^2(u)\,dx=&\int_{\R}\left[-\lambda(\phi_\lambda+\xi)
-(1-\partial_x^2)^{-1}S''_\lambda(\phi_\lambda)\xi-(1-\partial_x^2)^{-1}\mathcal{N}_2(\xi)\right]^2\,dx\nonumber\\
=&\lambda^2\|\phi_\lambda+\xi\|_2^2+2\lambda\langle\phi_\lambda, (1-\partial_x^2)^{-1}S''_\lambda(\phi_\lambda)\xi\rangle
+2\lambda\langle\xi,(1-\partial_x^2)^{-1}S''_\lambda(\phi_\lambda)\xi\rangle\nonumber\\
&+\|(1-\partial_x^2)^{-1}S''_\lambda(\phi_\lambda)\xi\|_2^2+O(\|\xi\|_{\H1}^2).\label{hu}
\end{align}
First, we estimate the first term in \eqref{hu}. Expanding gives
\begin{align}\label{hu1}
\|\phi_\lambda+\xi\|_2^2=\|\phi_\lambda\|_2^2+2\langle\xi,\phi_\lambda\rangle+O(\|\xi\|_{\H1}^2).
\end{align}
Furthermore, from \eqref{ut6}, we recall the identity
\begin{align}\label{dy-l}
\langle \xi, S''_\lambda(\phi_\lambda)(1-\partial_x^2)^{-1}\partial_x^2\phi_\lambda\rangle
=-(\dot y-\lambda)\|\partial_x\phi_\lambda\|_2^2+O(\|\xi\|_{\H1}^2).
\end{align}
Using the identity $\phi_\lambda=(1-\partial_x^2)\phi_\lambda+\partial^2_x\phi_\lambda$,
we estimate the second term in \eqref{hu} as follows:
\begin{align}
\langle\phi_\lambda&,(1-\partial_x^2)^{-1}S''_\lambda(\phi_\lambda)\xi\rangle
=\langle\xi,S''_\lambda(\phi_\lambda)(1-\partial_x^2)^{-1}\phi_\lambda\rangle\nonumber\\
=&\langle\xi,S''_\lambda(\phi_\lambda)(1-\partial_x^2)^{-1}(1-\partial_x^2)\phi_\lambda\rangle
+\langle\xi,S''_\lambda(\phi_\lambda)(1-\partial_x^2)^{-1}\partial^2_x\phi_\lambda\rangle\nonumber\\
=&\langle\xi,S''_\lambda(\phi_\lambda)\phi_\lambda\rangle
+\langle\xi,S''_\lambda(\phi_\lambda)(1-\partial_x^2)^{-1}\partial^2_x\phi_\lambda\rangle\nonumber\\
=&\langle\xi,S''_\lambda(\phi_\lambda)\phi_\lambda\rangle
-(\dot y-\lambda)\|\partial_x\phi_\lambda\|_2^2+O(\|\xi\|_{\H1}^2).\label{hu2}
\end{align}
Next, we estimate the fourth term in \eqref{hu}. Using the explicit form of $S''_\lambda(\phi_\lambda)$, we have
\begin{align}
\big\|(1-\partial_x^2)^{-1}S''_\lambda(\phi_\lambda)\xi\big\|_2^2
=&\big\|(1-\partial_x^2)^{-1}\big[-\lambda(1-\partial_x^2)\xi+\xi+\frac{\kappa p(p+1)}{2}\phi_\lambda^{p-1}\xi\big]\big\|_2^2\nonumber\\
=&\|-\lambda\xi+(1-\partial_x^2)^{-1}\big[\xi
+\frac{\kappa p(p+1)}{2}\phi_\lambda^{p-1}\xi\big]\|_2^2\nonumber\\
\lesssim&\|\xi\|_2^2+\|\xi\|_2^2+\|\frac{\kappa p(p+1)}{2}\phi_\lambda^{p-1}\|_{L^\infty}\|\xi\|_2^2\nonumber\\
\lesssim &\|\xi\|_2^2.\label{hu3}
\end{align}
Finally, we estimate the third term. By H\"{o}lder's inequality and \eqref{hu3}, we obtain
\begin{align}\label{hu4}
\left|\lambda\langle\xi,(1-\partial_x^2)^{-1}S''_\lambda(\phi_\lambda)\xi\rangle\right|
\lesssim\|\xi\|_2\,
\|(1-\partial_x^2)^{-1}S''_\lambda(\phi_\lambda)\xi\|_2
\lesssim \|\xi\|_2^2.
\end{align}
Combining \eqref{hu1}-\eqref{hu4}, we obtain
\begin{align}\label{hu5}
\frac{1}{2}\int_{\R}H^2(u)\,dx
&=\frac{\lambda^2}{2}\|\phi_\lambda\|_2^2+\lambda^2\langle\xi,\phi_\lambda\rangle
+\lambda\langle\xi,S''_\lambda(\phi_\lambda)\phi_\lambda\rangle\nonumber\\
&-\lambda(\dot y-\lambda)\|\partial_x\phi_\lambda\|_2^2+O(\|\xi\|_{\H1}^2).
\end{align}

We now turn to the estimate of $\int_{\R}(\partial_xH(u))^2\,dx$. From \eqref{hpx}, \eqref{dy-l} and integration by parts , we have
  \begin{align}
\frac{1}{2}\int_{\R}&(\partial_xH(u))^2\,dx
=\frac{1}{2}\int_{\R}\big[-\lambda(\partial_x\phi_\lambda+\partial_x\xi)
-\partial_x(1-\partial_x^2)^{-1}S''_\lambda(\phi_\lambda)\xi-\partial_x(1-\partial_x^2)^{-1}\mathcal{N}_2(\xi)\big]^2\,dx\nonumber\\
=&\frac{\lambda^2}{2}\|\partial_x\phi_\lambda+\partial_x\xi\|_2^2+\lambda\langle\partial_x\phi_\lambda, \partial_x(1-\partial_x^2)^{-1}S''_\lambda(\phi_\lambda)\xi\rangle
+\lambda\langle\partial_x\xi,\partial_x(1-\partial_x^2)^{-1}S''_\lambda(\phi_\lambda)\xi\rangle\nonumber\\
&+\frac{1}{2}\|\partial_x(1-\partial_x^2)^{-1}S''_\lambda(\phi_\lambda)\xi\|_2^2
+O(\|\xi\|_{\H1}^2)\nonumber\\
=&\frac{\lambda^2}{2}\|\partial_x\phi_\lambda\|_2^2-\lambda^2\langle\xi,\partial_x^2\phi_\lambda\rangle
-\lambda\langle\xi, S''_\lambda(\phi_\lambda)(1-\partial_x^2)^{-1}\partial_x^2\phi_\lambda\rangle
+O(\|\xi\|_{\H1}^2)\nonumber\\
=&\frac{\lambda^2}{2}\|\partial_x\phi_\lambda\|_2^2-\lambda^2\langle\xi,\partial_x^2\phi_\lambda\rangle
+\lambda(\dot y-\lambda)\|\partial_x\phi_\lambda\|_2^2+O(\|\xi\|_{\H1}^2).\label{xhu}
\end{align}

We now  estimate the remainder $\mathcal{R}_3$. From the properties of the cutoff function $\varphi$, we have
\begin{align*}
\big|\mathcal{R}_3\big|
\leq&|\dot y|\Big|\int_{|x-y|>R}[\varphi'(x-y(t))-1](\frac{1}{2}u^2+\frac{\kappa}{2} u^{p+1})\,dx\Big|\\
&+\Big|\int_{|x-y|>R}[\varphi'(x-y(t))-1][H^2(u)-(\partial_xH(u))^2]\,dx\Big|\\
\leq&\int_{|x|>R}\big[u^2+|u|^{p+1}+H(u)^2+(\partial_xH(u))^2\big]\,dx.
\end{align*}
Using Proposition \ref{modulation}, \eqref{hu5}, \eqref{xhu} and H\"{o}lder's inequality, we obtain
\begin{align*}
\big|\mathcal{R}_3\big|
\leq&\int_{|x|>R}\big[|\phi_\lambda+\xi|^2+|\phi_\lambda+\xi|^{p+1}+|\phi_\lambda|^2+|\partial_x\phi_\lambda|^2\big]\,dx\\
&+\Big|\int_{|x|>R}\xi\cdot(\phi_\lambda+\partial_x^2\phi_\lambda)\,dx\Big|.
\end{align*}
Since $\phi_\lambda$ and $\partial_x^2\phi_\lambda$ decay exponentially, we have
$$\int_{|x|>R}\big[|\phi_\lambda|^2+|\partial_x\phi_\lambda|^2\big]\,dx\lesssim \int_{|x|>\R}e^{-C|x|}\,dx\lesssim\frac{1}R.
$$
Hence,
\begin{align}\label{r3}
\big|\mathcal{R}_3\big|
= O(\|\xi\|_{\H1}^2+\frac{1}R).
\end{align}

Substituting \eqref{hu5}, \eqref{xhu} and \eqref{r3} into \eqref{i1t}, we obtain
\begin{align*}
I'_1(t)=&-\dot yE(u)-2\lambda(\dot y-\lambda)\|\partial_x\phi_\lambda\|_2^2
+\frac{\lambda^2}{2}\big(\|\phi_\lambda\|_2^2-\|\partial_x\phi_\lambda\|_2^2\big)\\
&+\langle\xi, \lambda^2\phi_\lambda+\lambda^2\partial_x^2\phi_\lambda+\lambda S''_\lambda(\phi_\lambda)\phi_\lambda\rangle+O(\|\xi\|_{\H1}^2+\frac{1}R).
\end{align*}
Recalling the definition $\tau_\lambda=S''(\phi_\lambda)\Gamma_\lambda=\lambda^2\phi_\lambda+\lambda^2\partial_x^2\phi_\lambda+\lambda S''_\lambda(\phi_\lambda)\phi_\lambda$
and using the orthogonality condition \eqref{orth-condition}, we finally obtain
  \begin{align}
I'_1(t)=&-(\dot y-\lambda)\big[E(u)+2\lambda\|\partial_x\phi_\lambda\|_2^2\big]
-\lambda E(u)
+\frac{\lambda^2}{2}\big(\|\phi_\lambda\|_2^2-\|\partial_x\phi_\lambda\|_2^2\big)\nonumber\\
&+O(\|\xi\|_{\H1}^2+\frac{1}R).\label{I1't}
\end{align}
This proves the proposition.
\end{proof}

From \eqref{xit-yt}, we know that $|\dot y-\lambda|=O(\|\xi\|_{\H1})$. To cancel the first-order term in $I'_1(t)$, we introduce another virial quantity $I_2(t)$. With $f_\lambda$ as defined in \eqref{f}, we set
\begin{align}\label{I2}
  I_2(t):=-\frac{D(\lambda)}{B(\lambda)}\langle\xi, (1-\partial_x^2)f_\lambda\rangle,
\end{align}
where
\[
D(\lambda):=E(\phi_\lambda)+2\lambda\|\partial_x\phi_\lambda\|_2^2
=\frac{4p\lambda-3p+3}{2(p+3)}\|\phi_\lambda\|_2^2.
\]
Differentiating \eqref{I2} with respect to $t$ and using $\dot \lambda=O(\|\xi\|_{\H1})$, we obtain
\begin{align}\label{I2't}
I'_2(t)=-\dot\lambda\partial_\lambda\left[\frac{D(\lambda)}{B(\lambda)}\right]\langle\xi, (1-\partial_x^2)f_\lambda\rangle
-\frac{D(\lambda)}{B(\lambda)}\partial_t\langle\xi, (1-\partial_x^2)f_\lambda\rangle.
\end{align}

From Corollary \ref{bl}, we know that
\begin{align*}
  \dot y-\lambda=\frac{-1}{B(\lambda)}\partial_t\langle\xi, (1-\partial_x^2)f_\lambda\rangle
  +O(\|\xi\|_{\H1}^2+\frac{1}R).
\end{align*}
Moreover, we have
$$E(u)=E(\phi_\lambda+\xi)=E(\phi_\lambda)+\langle E'(\phi_\lambda),\xi\rangle+O(\|\xi\|_{\H1}^2).$$
Combining \eqref{I1't}, \eqref{I2't} and Corollary \ref{bl}, we have
\begin{align}\label{I1+I2}
I'_1(t)+I'_2(t)=-\lambda E(u)+\frac{\lambda^2}{2}\big[\|\phi_\lambda\|_2^2-\|\partial_x\phi_\lambda\|_2^2\big]
+O(\|\xi\|_{\H1}^2+\frac{1}R).
\end{align}


\subsection{Proof of the theorem}
In this section, we fix $c=c_p^-$, set $u_0=(1-a)\phi_c,\ 0<a\ll1$, and assume $u\in U_\varepsilon(\phi_c)$. It then follows that $|\lambda-c|\lesssim \varepsilon$. Define
$$I(t):=I_1(t)+I_2(t),$$
that is,
\begin{align}
I(t)=\int_{\R}\varphi(x-y(t))(\frac{1}{2}u^2+\frac{\kappa}{2} u^{p+1})\,dx
-\frac{D(\lambda)}{B(\lambda)}\langle\xi, (1-\partial_x^2)f_\lambda\rangle.
\end{align}
Consequently, $I(t)$ is uniformly bounded:
\begin{align}\label{I-B}
  \sup_{t\in\R}I(t)\lesssim R(\|\phi_c\|_2^2+1).
\end{align}

From \eqref{I1+I2} and the conservation of energy $E(u)=E(u_0)$, we have
\begin{align*}
I'(t)=-\lambda[E(u_0)-E(\phi_c)]
-\lambda E(\phi_c)+\frac{\lambda^2}{2}\big[\|\phi_\lambda\|_2^2-\|\partial_x\phi_\lambda\|_2^2\big]
+O(\|\xi\|_{\H1}^2+\frac{1}R).
\end{align*}
We now estimate $I'(t)$. For the first term, using \eqref{pphi}, we obtain
\begin{align}\label{Eu0-phic}
-\lambda[E(u_0)-E(\phi_c)]
=&-\lambda\langle E'(\phi_c),u_0-\phi_c\rangle+O(\|u_0-\phi_c\|_{\H1}^2)\nonumber\\
=&\lambda\langle\phi_c+\frac{\kappa(p+1)}{2}\phi_c^p, a\phi_c\rangle+O(a^2)\nonumber\\
=&a\lambda\big[\|\phi_c\|_2^2+\frac{\kappa(p+1)}{2}\int_{\R}\phi_c^{p+1}dx\big]+O(a^2)\nonumber\\
=&a\lambda\frac{2(p+1)c-(p-1)}{p+3}\|\phi_c\|_2^2+O(a^2)\nonumber\\
=&ac\frac{2(p+1)c-(p-1)}{p+3}\|\phi_c\|_2^2+O(a|\lambda-c|)+O(a^2)\nonumber\\
\geq&\frac{C_1}{2}a.
\end{align}
where $C_1:=\frac{2(p+1)c^2-(p-1)c}{p+3}\|\phi_c\|_2^2>0$.

Moreover, by \eqref{xphi}, we obtain
\begin{align*}
g(\lambda):=&-\lambda E(\phi_c)+\frac{\lambda^2}{2}\big[\|\phi_\lambda\|_2^2-\|\partial_x\phi_\lambda\|_2^2\big]\\
=&-\lambda\frac{4c+p-1}{2(p+3)}\|\phi_c\|_2^2+\lambda\frac{4\lambda+p-1}{2(p+3)}\|\phi_\lambda\|_2^2.
\end{align*}
At $\lambda=c$, we have
\begin{align}
g(c)=0.
\end{align}
From \eqref{cphi}, a direct computation yields
\begin{align}
\partial_\lambda\|\phi_\lambda\|_2^2=\frac{4\lambda-p+1}{2(p-1)(\lambda-1)\lambda}\|\phi_\lambda\|_2^2\label{phi-1},
\end{align}
and
\begin{align}
\partial^2_\lambda\|\phi_\lambda\|_2^2
=\frac{-8(p-3)\lambda^2+4(p-1)(p-3)\lambda-(p-1)^2}{4(p-1)^2(\lambda-1)^2\lambda^2}\|\phi_\lambda\|_2^2.\label{phi-2}
\end{align}
Differentiating $g(\lambda)$ with respect to $\lambda$, we have
\begin{align*}
  g'(\lambda)=&-\frac{4c+p-1}{2(p+3)}\|\phi_c\|_2^2
  +\frac{8\lambda+p-1}{2(p+3)}\|\phi_\lambda\|_2^2
  +\frac{4\lambda^2+(p-1)\lambda}{2(p+3)}\partial_\lambda\|\phi_\lambda\|_2^2.
\end{align*}
Using $c=c_p^-$, \eqref{phi-1}, and Lemma \ref{Qc}, we obtain
\begin{align}
g'(c)=\frac{8(p+1)c^2-8(p-1)c-(p-1)^2}
{4(p+3)(p-1)(c-1)}\|\phi_c\|_2^2=0.
\end{align}
Differentiating once more with respect to $\lambda$, we obtain
\begin{align*}
g''(\lambda)
=&\frac{4}{p+3}\|\phi_\lambda\|_2^2
+\frac{8\lambda+p-1}{(p+3)}\partial_\lambda\|\phi_\lambda\|_2^2
+\frac{4\lambda^2+(p-1)\lambda}{2(p+3)}\partial^2_\lambda\|\phi_\lambda\|_2^2.
\end{align*}
Substituting \eqref{phi-1} and \eqref{phi-2} into $g''(\lambda)$ and then setting $c=c_p^-$, we obtain
\begin{align*}
g''(c)=&\frac{\|\phi_c\|_2^2}{8(p+3)(p-1)^2(c-1)^2c}
\Big[32p(p+1)c^3-72(p+1)(p-1)c^2\\
&\hspace{42mm}+36(p-1)^2c+3(p-1)^3\Big].
\end{align*}
To make the sign transparent, use the endpoint identity \eqref{zero}. The polynomial in brackets reduces to
\[
\frac{2(p-1)^2(p+3)}{p+1}\big[2c(p-3)-p+1\big].
\]
For $p=2$, the last factor equals $-2c-1<0$ at $c=c_2^-$; for $p=3$ it equals $-2$; and for $p>3$ it is also strictly negative because $c<0$. Since the denominator of the preceding expression for $g''(c)$ is negative, we conclude that $g''(c)>0$.

Therefore, Taylor's theorem yields
\begin{align}\label{glamda}
g(\lambda)=&g(c)+g'(c)(\lambda-c)+\frac{1}{2}g''(c)(\lambda-c)^2+o\big((\lambda-c)^2\big)\nonumber\\
\geq&C_2(\lambda-c)^2,
\end{align}
where $C_2:=\frac{g''(c)}{2}>0$.

It remains to estimate the last term in $I'(t)$. Since $u(t,x)=(\phi_\lambda+\xi)(x-y(t))$ and $S'_\lambda(\phi_\lambda)=0$,  we have
\begin{align*}
S_\lambda(u)-S_\lambda(\phi_\lambda)
=&\langle S'_\lambda(\phi_\lambda), \xi\rangle+\frac{1}{2}\langle S''_\lambda(\phi_\lambda)\xi, \xi\rangle
+o(\|\xi\|^2_{\H1})\\
=&\frac{1}{2}\langle S''_\lambda(\phi_\lambda)\xi, \xi\rangle
+o(\|\xi\|^2_{\H1}).
\end{align*}
By Corollary \ref{uniform-coercivity} and the uniform Taylor expansion
of $S_\lambda$ around $\phi_\lambda$, after reducing $\varepsilon$ if
necessary,
\begin{equation}\label{geqxi}
S_\lambda(u)-S_\lambda(\phi_\lambda)
\ge \frac{C_*}{4}\|\xi\|_{H^1}^2.
\end{equation}

On the other hand, we write
\begin{align}\label{sup}
S_\lambda(u)-S_\lambda(\phi_\lambda)=S_\lambda(u_0)-S_\lambda(\phi_c)
+S_\lambda(\phi_c)-S_\lambda(\phi_\lambda).
\end{align}
We now estimate each term in \eqref{sup}. Using the definitions of $E$, $S$ and  the fact that $S'_c(\phi_c)=0$, we have
\begin{align}
S_\lambda(u_0)-S_\lambda(\phi_c)
=&S_c(u_0)-S_c(\phi_c)+(c-\lambda)[Q(u_0)-Q(\phi_c)]\nonumber\\
=&\langle S'_c(\phi_c), u_0-\phi_c\rangle
+(c-\lambda)\langle Q'(\phi_c),u_0-\phi_c\rangle+O(\|u_0-\phi_c\|_{\H1}^2)\nonumber\\
=&a(\lambda-c)\langle \phi_c-\partial_x^2\phi_c, \phi_c\rangle +O(a^2)\nonumber\\
=&a(\lambda-c)\big[1+\frac{(p-1)(c-1)}{(p+3)c}\big]\|\phi_c\|_2^2
+O(a^2+a|\lambda-c|).\label{u0-phi}
\end{align}

Note that $\phi_\lambda-\phi_c=(\lambda-c)\partial_c\phi_c+O\big((\lambda-c)^2\big)$. By Lemma \ref{S''} and Lemma \ref{Qc}, we have
\begin{align}
S_\lambda(\phi_\lambda)-S_\lambda(\phi_c)
=&S_c(\phi_\lambda)-S_c(\phi_c)+(c-\lambda)[Q(\phi_\lambda)-Q(\phi_c)]\nonumber\\
=&\langle S'_c(\phi_c), \phi_\lambda-\phi_c\rangle
+\frac{1}{2}\langle S''_c(\phi_c)(\phi_\lambda-\phi_c), \phi_\lambda-\phi_c\rangle+o((\lambda-c)^2)\nonumber\\
&+(c-\lambda)\langle Q'(\phi_c),\phi_\lambda-\phi_c\rangle+(c-\lambda)O(\|\phi_\lambda-\phi_c\|_{\H1}^2)\nonumber\\
=&\frac{1}{2}(\lambda-c)^2\langle S''_c(\phi_c)\partial_c\phi_c, \partial_c\phi_c\rangle
-(\lambda-c)^2\langle Q'(\phi_c),\partial_c\phi_c\rangle+o((\lambda-c)^2)\nonumber\\
=&o((\lambda-c)^2).\label{phl-phc}
\end{align}

Combining \eqref{geqxi}, \eqref{u0-phi} and \eqref{phl-phc}, we obtain
\begin{align}\label{xi-xi-2}
\|\xi\|^2_{\H1}= O(a^2+a|\lambda-c|)+o((\lambda-c)^2).
\end{align}

Substituting \eqref{Eu0-phic}, \eqref{glamda}, \eqref{xi-xi-2} into $I'(t)$,  we obtain
\begin{align*}
I'(t)\geq C_1a+C_2(\lambda-c)^2+O(a^2+a|\lambda-c|)+o((\lambda-c)^2)+O(\frac{1}R).
\end{align*}
Choosing $R$ satisfying $\frac{1}R\leqslant a^2$, $\varepsilon$ and $a_0$ small enough, for any $a\in(0,a_0)$, we have
$$I'(t)\geq \frac{1}{4}C_1a+\frac{1}{4}C_2(\lambda-c)^2 \geq \frac{1}{4}C_1a>0, \qquad t\geq0 .$$
Hence $I(t)\to+\infty$ as $t\to\infty$, contradicting the uniform
bound \eqref{I-B}. This contradiction proves the orbital instability
of $\phi_c$ and completes the proof of Theorem \ref{thm:mainthm}.


\end{document}